\documentclass[12pt]{article}

\usepackage{mathrsfs}
\usepackage[utf8]{inputenc}
\usepackage[T2A]{fontenc}
\usepackage[english]{babel}
\usepackage{xcolor}
\usepackage{amsmath}
\usepackage{amsfonts, amssymb}
\usepackage{amsthm,epsfig,epstopdf,titling,array}
\usepackage{float}
\usepackage{soul}
\usepackage[initials]{amsrefs}
\usepackage{fullpage}
\usepackage{color}
\usepackage{eucal}

\usepackage[hidelinks]{hyperref}

\theoremstyle{plain}
\newtheorem{thm}{Theorem}[section]
\newtheorem{lemma}[thm]{Lemma}

\newtheorem*{claim*}{Claim}

\newtheorem{prop}[thm]{Proposition}
\newtheorem*{thm*}{Theorem}
\newtheorem*{lemma*}{Lemma}
\theoremstyle{definition}
\newtheorem{defn}{Defintion}[section]

\newtheorem{rem}[thm]{Remark}

\newcommand{\arrow}{\overrightarrow}

\newcommand{\wt}{\widetilde}

\newcommand{\supp}{\operatorname{supp}}
\author{S.T. Krymskii}
\title{On the lowest possible dimension of supports of solutions to the discrete Schrodinger equation}

\begin{document}
\maketitle
\begin{abstract}
In this article we study the possible size of support of solutions to the discrete stationary Schrodinger equation $\Delta u(x)+V(x)u(x)=0$ in $\mathbb{Z}^d$. We show that for any nonzero solution to any discrete stationary Schrodinger equation the dimension of the support is at least $\log_2(d)-7.$ 

In the related setting of $\mathbb{Z}_2$-valued harmonic functions in $\mathbb{Z}^d$ one can improve the estimate on support's dimension to $\log_2(d).$ 

However, we also provide an example where a $\mathbb{Z}_2$-valued harmonic function in $\mathbb{Z}^d$ has a fractal-like support with dimension $\log_2(d)+1$. This fractal satisfies a recurrence relation: $$X = 2X+\{e_1,-e_1,\dots,e_d,-e_d\}.$$ 

This example and estimate provide an answer to the Malinnikova's question about the smallest size of set $X\subset\mathbb{Z}^d$ such that no cross contains exactly one point of $X$.\footnote{Keywords: discrete Schrodinger equation}
\end{abstract}
\textbf{Acknowledgements.} The estimate for the dimension of the support of discrete solutions to the Schrodinger equation was produced within the scope of the NCCR SwissMAP which was funded by the Swiss National Science Foundation (grant number 205607).  The example of $\mathbb{Z}_2$-valued harmonic function supported on a fractal subset of $\mathbb{Z}^d$ is a part of a master thesis of the author (supervised by A. Logunov who was funded by SNSF).
\section{Introduction}

The article of Bourgain and Kenig \cite{BK05} concerns a version of the Anderson-Bernoulli model in $\mathbb{R}^d$ and proves Anderson localisation near the edge of the spectrum in this model. One of the used tools  is quantitative unique continuation for elliptic PDEs in $\mathbb{R}^d$. One of the crucial difficulties to prove Anderson localization near the edge of the spectrum for $\mathbb{Z}^d$ models is the lack of theorems on discrete quantitative unique continuation. We refer to the talk of Charlie Smart \cite{BLMS22} where the exact problems (useful for Anderson localisation) on discrete quantitative unique continuation are described.  The localization near the edge of the spectrum in the discrete setting of Anderson-Bernoulli model has been established in the cases of $\mathbb{Z}^2$ and $\mathbb{Z}^3$  by Ding, Smart, Li and Zhang, see  \cite{DS20}, \cite{LZ22}, \cite{S22}, but is open in the higher dimensional cases.

We now describe one of the problems concerning discrete quantitative unique continuation. Consider a solution to discrete Schrodinger equation $\Delta u+Vu = 0$ in $\mathbb{Z}^d$: 
\begin{equation}
\label{eq:Sch}
\sum_{i=1}^d u(x\pm e_i) -2du(x)+V(x)u(x) = 0.
\end{equation}
What is the smallest possible size of $u$'s support?

To specify the size of the support, we define  the discrete dimension of a set $X$ as $$\limsup_{r\to\infty} \frac{\ln |X \cap B_r(0)|}{\ln r}.$$

Define a cross centered at $x$ as the set $\bigcup_i \{x+ e_i,x-e_i\}\cup\{x\}$. If a point $x\pm e_i$ is in the support of $u,$ then the discrete equation \eqref{eq:Sch} implies that the cross must contain at least 2 points from $\supp u$. Then one can give the following definition.
\begin{defn}
\label{satis}
    Call a set $X\subset\mathbb{Z}^d$ \textit{satisfying the cross condition} if there exists no point $y\in\mathbb{Z}^d$ such that the set $X$ intersects the cross $\{y\}\bigcup_{i=1}^d\{y-e_i,y+e_i\}$ in exactly one point. 
\end{defn}

\textbf{Question of Malinnikova.} Suppose that a set $X\subset\mathbb{Z}^d$ contains $0$ and satisfies the cross condition. Can the discrete dimension of $X$ be strictly smaller than $d/2$?

It is known (see, e.g. \cite{S22}) that for any even dimension $d$ there exists a function $u$ whose support has discrete dimension $d/2$ solving the Schrodinger equation with a constant potential  $$\sum_{i=1}^d u(x\pm e_i) = 0.$$ For $d=2$ the example of such a function is $u(x,y) = (-1)^x\delta_{xy}.$

The main results are the following. In Section \ref{2} we  present an example of a $\mathbb{Z}_2$-valued harmonic function in $\mathbb{Z}^d$, whose support has dimension $\log_2(2d).$ A modification of this example shows (see Remark \ref{MalinRem}) that there is a set satisfying the cross condition and having dimension $\log_2(2d).$ For $d\geq 9$ the dimension of the set $\log_2(2d)<d/2,$ and the modified example provides the answer to the Malinnikova question for $d\geq 9.$  Therefore, one cannot use the requirements mentioning only the supports to show that solutions to the discrete Schrodinger equation have support with discrete dimension at least $d/2.$ 

However, one can also show that harmonic functions with values in $\mathbb{Z}_2$ cannot have a support with discrete dimension less than $\log_2(d)$, as seen in Section \ref{3}. In addition, we prove in Section \ref{5} that any nonempty set $X$ such that no cross covers exactly one point of $X$ has discrete dimension at least $\log_2(d)-7.$ These results provide an asymptotically sharp estimate on discrete dimension of sets satisfying the cross condition from Malinnokova's question. 

\section{Example}
\label{2}
\begin{thm}
\label{2.1}
    There exists a $\mathbb{Z}_2$-valued  harmonic function $u$ defined on the set $\mathbb{Z}^d$ such that there is a constant $C = C(d)$ s.t. for any $n\in\mathbb{N}$ there is no ball of size $n$ containing more than $Cn^{\log_2(2d)}$ points of $\supp u.$
\end{thm}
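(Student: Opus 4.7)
My plan is to take $u = \mathbf{1}_Y$ for the self-similar fractal $Y \subset \mathbb{Z}^d$ suggested by the abstract. Concretely, with $V = \{\pm e_1, \ldots, \pm e_d\}$ and $Y' := Y \cup \{0\}$, let $Y$ be the set satisfying $Y = 2Y' + V$, equivalently $Y = \bigcup_{n \geq 1}\bigl\{\sum_{i=0}^{n-1} 2^i v_i : v_i \in V\bigr\}$. For the cardinality estimate, define $M(n) = \sup_{x \in \mathbb{Z}^d} |Y \cap B_n(x)|$; the decomposition $Y = \bigcup_{v \in V}(v + 2Y')$ yields the scaling recursion $M(n) \leq 2d\bigl(M(n/2) + 1\bigr)$, and since the exponent $\alpha = \log_2(2d)$ is precisely the one making $2d \cdot 2^{-\alpha} = 1$, routine unrolling gives $M(n) \leq C(d) \cdot n^{\log_2(2d)}$, which is the bound claimed.

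For the $\mathbb{Z}_2$-harmonicity condition $N(y) := |\{v \in V : y + v \in Y\}| \equiv 0 \pmod 2$, I would argue by strong induction on $|y|_\infty$, splitting into cases according to the number $s$ of odd coordinates of $y$. The structural observation that does most of the work is that every element of $Y$ has exactly one odd coordinate, since the lowest-order term $v_0 \in V$ forces a single $\pm 1$ entry. For $y = 0$ all $2d$ neighbors lie in $V \subset Y$. For $s = 1$ or $s \geq 3$ every neighbor has either $0$ or $\geq 2$ odd coordinates and therefore cannot lie in $Y$, so $N(y) = 0$. For $s = 0$ with $y = 2w \neq 0$, I use the identity $\mathbf{1}(A \vee B) \equiv \mathbf{1}(A) + \mathbf{1}(B) + \mathbf{1}(A)\mathbf{1}(B) \pmod 2$ to rewrite $N(y) \equiv M'(w)\bigl(1 + \mathbf{1}_{Y'}(w)\bigr) \pmod 2$, where $M'(w) := \sum_v \mathbf{1}_{Y'}(w + v)$; the inductive hypothesis gives $M'(w) \equiv \mathbf{1}_V(w) \pmod 2$, and since $V \subset Y'$ this product vanishes in both sub-cases $w \in V$ and $w \notin V$.

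The main obstacle is the remaining case $s = 2$, with odd coordinates at positions $j, k$. Only the four neighbors $v \in \{\pm e_j, \pm e_k\}$ can send $y$ into $Y$, and each such membership is an OR over two possible lifts of the form $(y + v - v')/2$ with $v' \in V$ matched to parity. Expanding in $\mathbb{Z}_2$ reduces $N(y)$ modulo 2 to the product $(b_{00} + b_{11})(b_{01} + b_{10})$, where $b_{\alpha\beta} = \mathbf{1}_{Y'}(z + \alpha e_j + \beta e_k)$ records $Y'$-membership at the four corners of a canonical $2 \times 2$ sub-square in the $(j,k)$-plane ($z$ being the reduction of $y$ away from coordinates $j, k$). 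Showing this product is always $0 \pmod 2$ is the technical heart; it follows from a case analysis on the number $q$ of odd coordinates of $z$ outside $\{j, k\}$ together with the parity pair $(z_j \bmod 2, z_k \bmod 2)$, leveraging two constraints on $Y'$: every element has at most one odd coordinate, and $0$ is the only all-even element. These constraints force that when $q \geq 2$ all four $b_{\alpha\beta}$ vanish; when $q = 1$ at most one corner is in $Y'$, making one of the two diagonal sums automatically zero; and when $q = 0$ the parity table of $(z_j, z_k)$ makes either the main-diagonal sum or the anti-diagonal sum even in each of the four sub-cases.
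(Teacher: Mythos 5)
You construct exactly the paper's set (its $X_\infty$) and the cardinality argument is essentially the paper's in self-similar guise, but your harmonicity argument takes a genuinely different route. The paper works in the Laurent-polynomial algebra $\mathbb{Z}_2[x_1^{\pm1},\dots,x_d^{\pm1}]$: using the Frobenius identity it factors $S^{2^k-1}=\prod_{i<k}S^{2^i}$, reads off that the partial fractal $X_k$ has exactly $\{\pm 2^k e_i\}$ as its odd-degree vertices, and then passes to the limit using the nesting $X_k\subset X_{k+1}$ with new points far away. You instead argue directly in $\mathbb{Z}_2$: the key structural facts are that every $y\in Y$ has exactly one odd coordinate (read off from $v_0$) and that $0$ is the unique all-even element of $Y'$, and you case-split on the number $s$ of odd coordinates of the test point, reducing $s=0$ to a half-scale copy via $Y=2Y'+V$ and collapsing $s=2$ to the single quadratic $(b_{00}+b_{11})(b_{01}+b_{10})$ on a $2\times2$ face. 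This avoids the generating-function machinery and keeps everything local, at the cost of a denser case analysis. Both approaches are correct; the paper's is slicker, yours is more elementary and makes the self-similarity $Y=2Y'+V$ do all the work.

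One spot is glossed over and should be spelled out: in the $s=2$, $q=0$ sub-case the vanishing of the product is \emph{not} determined by the parity pair $(z_j\bmod 2,\,z_k\bmod 2)$ alone. For each parity pair, the all-even corner and the two-odd (forbidden) corner sit on the same diagonal, whose sum equals $\mathbf{1}[\text{all-even corner}=0]$, so one must split further. If that corner is not $0$ that diagonal sum already vanishes; if it is $0$ then $z=-p_je_j-p_ke_k$, the two single-odd corners are exactly $\pm e_j$ and $\pm e_k\in V\subset Y'$, and the other diagonal sums to $1+1\equiv 0$. Either way the product is $0$, so the claim holds, but as written your sentence suggests the parity table alone decides it, which it does not.
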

\begin{rem}
\label{2.1alt}
    Another form of the statement is the following: there exist a set $X\subset \mathbb{Z}^d$ and a constant  $C = C(d)$ s.t. for any $n\in\mathbb{N}$ there is no ball of size $n$ containing more than $Cn^{\log_2(2d)}$ points of $X,$ and any point $y\in\mathbb{Z}^d$ has an even amount of neighbors in $X$. The latter condition on the set $X$ is true if and only if the characteristic function of the set $X$ is harmonic.
\end{rem}
Before proving the Theorem \ref{2.1}, we present several lemmas and use them to prove the Theorem \ref{2.1}. In addition, we define  \begin{equation}
    \label{S}
    S = x_1+x_1^{-1}+x_2+x_2^{-1}+\dots+x_d+x_d^{-1}.
\end{equation}
\begin{lemma}
Consider the ring $\mathbb{Z}[x_1,\dots,x_d,x_1^{-1},\dots,x_d^{-1}]$ of series over the ring $\mathbb{Z}.$ For any set $X\subset \mathbb{Z}^d$ define the formal series $P_X = \sum_{t\in X}\prod x_i^{t_i}.$ Then $$P_XS = \sum_{t\in\mathbb{Z}^d} N_X(t)\prod x_i^{t_i},$$ where $N_X(t)$ is the number of neighbors of $t$ in $X.$
\end{lemma}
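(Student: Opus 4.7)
The plan is a direct expansion and coefficient comparison. First I would rewrite $S=\sum_{j=1}^d (x_j+x_j^{-1})$ and $P_X=\sum_{t\in X}\prod_i x_i^{t_i}$, then distribute the product. For each fixed $t\in X$ and each index $j$, multiplication by $x_j^{\pm 1}$ shifts the exponent vector: $\left(\prod_i x_i^{t_i}\right)\cdot x_j^{\pm 1}=\prod_i x_i^{(t\pm e_j)_i}$. Therefore
$$P_X S = \sum_{t\in X}\sum_{j=1}^d\left(\prod_i x_i^{(t+e_j)_i}+\prod_i x_i^{(t-e_j)_i}\right).$$

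Next I would extract the coefficient of a given monomial $\prod_i x_i^{s_i}$ for $s\in\mathbb{Z}^d$. It equals the number of pairs $(t,j)$ with $t\in X$ and $s=t+e_j$, plus the number with $s=t-e_j$. Each such pair corresponds to writing $t=s\mp e_j$, i.e., to a lattice neighbor of $s$ that lies in $X$. Since the $2d$ neighbors of $s$ are pairwise distinct and each comes from exactly one such pair, the coefficient is precisely $N_X(s)$, which yields the claimed identity.

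There is essentially no obstacle here; the only point that deserves a sentence of justification is the bijection between pairs $(t,\pm e_j)$ with $t\in X$, $s=t\pm e_j$, and neighbors of $s$ in $X$, to rule out any double-counting. Convergence is not an issue because $P_X S$ is interpreted in the ring $\mathbb{Z}[x_1^{\pm 1},\dots,x_d^{\pm 1}]$ of (possibly infinite) formal series for which each monomial coefficient is a finite sum; indeed, only the $2d$ shifts of any fixed exponent vector contribute to a given coefficient.
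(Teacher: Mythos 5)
Your proof is correct, and it is exactly the direct expansion-and-coefficient-comparison argument that the paper dismisses as ``Trivial.'' There is nothing to compare: you have simply written out the details the authors chose to omit, and the bijection you point out between pairs $(t,\pm e_j)$ with $t\in X$, $s=t\pm e_j$ and neighbors of $s$ in $X$ is the right thing to make explicit.
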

\begin{proof}
    Trivial.
\end{proof}
\begin{rem}
\label{rem2.4}
For the ring $\mathbb{Z}_2[x_1,\dots,x_d,x_1^{-1},\dots,x_d^{-1}]$ of series over the ring $\mathbb{Z}_2$ we find that $$P_XS = \sum_{t\in\mathbb{Z}^d} (N_X(t)\bmod 2)\prod x_i^{t_i}.$$
\end{rem}

Consider the algebra of polynomials $\mathbb{Z}_2[x_1,x_1^{-1},\dots,x_d,x_d^{-1}].$ Since this algebra is over a field of characteristic 2, $S^{2^k} = \sum x_i^{\pm 2^k}.$ But what is $S^{2^k-1}?$ It turns out that $$S^{2^k-1} = SS^2S^4\dots S^{2^{k-1}} = \prod_{i=0}^{k-1} \sum_{j=1}^d x_j^{\pm 2^i}.$$ Therefore, $S^{2^k-1}$ is the sum $\sum_{(y_1,\dots,y_d)\in X_k} \prod x_i^{y_i},$ where \begin{equation}
    \label{Xk} X_k = \sum_{i=0}^k 2^i\{e_1,-e_1,e_2,-e_2,\dots,e_d,-e_d\}.
\end{equation} 
\begin{lemma}
\label{2.4}
For a point $x\in\mathbb{Z}^d$ the number of neighbors of $x$ in $X_k$ is odd if and only if $x = \pm 2^ke_i$ for a unit vector $e_i$.
\end{lemma}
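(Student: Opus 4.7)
The plan is to deduce the lemma directly from the polynomial identity established just above its statement. By the definition of $X_k$ (equation \eqref{Xk}) together with the preceding computation, we have $P_{X_k} = S^{2^k-1}$ in the ring $\mathbb{Z}_2[x_1^{\pm 1},\ldots,x_d^{\pm 1}]$. The first step is therefore to multiply through by $S$ to obtain
\[
P_{X_k}\cdot S \;=\; S^{2^k}.
\]
The right-hand side can then be simplified using the Frobenius endomorphism in characteristic $2$: iterating $(a+b)^2 = a^2 + b^2$ a total of $k$ times collapses $S^{2^k}$ to the sum of the $2^k$-th powers of its terms,
\[
S^{2^k} = \sum_{j=1}^d \left( x_j^{2^k} + x_j^{-2^k} \right).
\]

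On the other hand, by Remark \ref{rem2.4}, the left-hand side $P_{X_k}\cdot S$ equals $\sum_{t\in\mathbb{Z}^d}(N_{X_k}(t)\bmod 2)\prod_i x_i^{t_i}$. Comparing the coefficient of $\prod_i x_i^{t_i}$ in the two expressions, I conclude that $N_{X_k}(t)$ is odd exactly when $t = \pm 2^k e_j$ for some $j\in\{1,\ldots,d\}$, and even otherwise, which is precisely the statement of the lemma.

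I do not anticipate any substantial obstacle. The whole argument is the observation that multiplication by $S$ converts the parity of the neighbor count at each point into a purely algebraic quantity, and that squaring kills all cross terms in characteristic $2$. The only minor bookkeeping is to make sure that the Minkowski-sum definition of $X_k$ matches the support of $S^{2^k-1}$ over $\mathbb{Z}_2$, which reduces to the identity $1+2+\cdots+2^{k-1} = 2^k-1$ (and is anyway already recorded in the paragraph preceding the lemma).
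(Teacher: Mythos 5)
Your proposal is correct and follows essentially the same route as the paper: identify $P_{X_k}$ with $S^{2^k-1}$ via the Frobenius identity, multiply by $S$ to get $S^{2^k}=\sum_j x_j^{\pm 2^k}$, and read off the parity of neighbor counts by comparing coefficients using Remark \ref{rem2.4}. The only cosmetic difference is that the paper re-derives the Frobenius collapse inside the proof, while you invoke the preceding paragraph directly.
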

\begin{proof}
Consider the polynomial $\sum_{(y_1,\dots,y_n)\in X_k} \prod x_i^{y_i} = \prod_{i=0}^{k-1} \sum_{j=1}^d x_j^{\pm 2^i}.$ Remember that $(\sum a_i)^2 = \sum a_i^2$ in any algebra with characteristics 2. Therefore, $$\sum_{j=1}^d x_j^{\pm 2^i} = \left(\sum_{j=1}^d x_j^{\pm 2^{i-1}}\right)^2 = \dots = \left(\sum_{j=1}^d x_j^{\pm 1}\right)^{2^i},$$ and $\prod_{i=0}^{k-1} \sum_{j=1}^d x_j^{\pm 2^i} = \left(\sum_{j=1}^d x_j^{\pm 1}\right)^{2^k-1} = S^{2^k-1}.$ 

However, due to Remark \ref{rem2.4} for any set $X$ $$S\sum_{(y_1,\dots,y_d)\in X} \prod x_i^{y_i}= \sum_{(z_1,\dots,z_d) \in X^*} \prod x_i^{z_i},$$ where $X^*$ is the set of points having an odd number of neighbors in $X$ and $S$ is given by the formula \eqref{S}. 

Finally, $$S\sum_{(y_1,\dots,y_d)\in X_k} \prod x_i^{y_i} = SS^{2^k-1}=S^{2^k} = \sum_{i=1}^d x_i^{\pm 2^k},$$ and $X_k^* = \{\pm 2^ke_i\}.$
\end{proof}

Next we show that the set $X_{n+1}$ defined by the formula \eqref{Xk} contains the set $X_n.$ Recalling that $2^n(\pm e_i) = 2^{n+1}(\pm e_i)+2^n(\mp e_i),$ we find that $$X_{n+1} = X_{n-1}+\{\pm 2^n e_i\}+\{\pm 2^{n+1}e_i\} =$$$$= X_{n-1}+\left(\{\pm 2^n e_i\}\cup\{\pm(3\cdot 2^n)e_i\}\cup\{\pm 2^{n+1}e_i\pm 2^ne_j|i\neq j\}\right)\supset$$$$\supset X_{n-1}+\{\pm 2^n e_i\}= X_n.$$ Therefore, $X_{n+1}$ contains $X_n.$  The following lemma shows that the elements which are in $X_{n+1}\setminus X_n$ are very far from zero. 
\begin{lemma}
\label{2.5}The sets $X_n$ defined by the equation \ref{Xk} have the property $$(X_n\setminus X_{n-1})\cap [-2^{n-1},2^{n-1}]^d = \emptyset.$$
\end{lemma}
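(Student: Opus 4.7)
The claim is equivalent to the statement: every $x \in X_n$ with $\|x\|_\infty \leq 2^{n-1}$ already lies in $X_{n-1}$. I will prove this by explicitly rewriting such an $x$ as an element of $X_{n-1}$.

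Fix any representation $x = \sum_{i=0}^{n} 2^i v_i$ with $v_i \in E := \{\pm e_1,\dots,\pm e_d\}$, guaranteed by the definition \eqref{Xk}, and let the top vector be $v_n = \sigma e_j$ for some $\sigma \in \{\pm 1\}$ and some coordinate $j$. By the symmetry $x \mapsto -x$ (which preserves both $X_n$ and the cube $[-2^{n-1},2^{n-1}]^d$) I may assume $\sigma = +1$. Looking at the $j$-th coordinate yields $x_j = 2^n + \sum_{i=0}^{n-1} 2^i (v_i)_j$ with each $(v_i)_j \in \{-1,0,1\}$, so the hypothesis $|x_j| \leq 2^{n-1}$ rearranges to $\sum_{i=0}^{n-1} 2^i (v_i)_j \leq -2^{n-1}$. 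Since the strictly lower-order portion satisfies $|\sum_{i=0}^{n-2} 2^i (v_i)_j| \leq 2^{n-1}-1$, the remaining term $2^{n-1}(v_{n-1})_j$ alone must supply the full $-2^{n-1}$, which forces $(v_{n-1})_j = -1$, i.e.\ $v_{n-1} = -e_j$.

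Having pinned down the two top vectors as $v_n = e_j$ and $v_{n-1} = -e_j$, I fuse them to a single vector at scale $n-1$: $2^n v_n + 2^{n-1} v_{n-1} = 2^n e_j - 2^{n-1} e_j = 2^{n-1} e_j$. Setting $w_{n-1} := e_j$ and $w_i := v_i$ for $0 \leq i \leq n-2$ then yields $x = \sum_{i=0}^{n-1} 2^i w_i$, displaying $x$ as an element of $X_{n-1}$, as required.

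This is essentially a base-$2$ pigeonhole-with-signs argument: the magnitude $2^{n-1}$ is the unique threshold at which the top digit of the expansion is completely determined, and the cancellation at that step is exactly what allows us to drop one level in $n$. I do not foresee any real obstacle; the only point requiring some care is the sign/indexing bookkeeping when combining $2^n v_n$ with $2^{n-1} v_{n-1}$, which is what makes it worth writing out explicitly rather than pushing an induction.
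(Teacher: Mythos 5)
Correct, and essentially the same argument as the paper's, just in contrapositive form: you assume $\|x\|_\infty \leq 2^{n-1}$ and show that the top two digit-vectors must cancel (so the representation fuses down to $X_{n-1}$), while the paper assumes $x\in X_n\setminus X_{n-1}$, notes the top two digit-vectors therefore cannot cancel, and concludes directly that some coordinate of $x$ exceeds $2^{n-1}$. Both hinge on the same key bound $\bigl\|\sum_{i=0}^{n-2}2^i v_i\bigr\|_\infty \leq 2^{n-1}-1$.
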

\begin{proof}
Suppose that an element $x$ is in the set $X_n$, but not in $X_{n-1}.$ Then $x = \sum_{i=0}^n 2^iv_i,$ where the vectors $v_i$ are in the set $\{e_1,-e_1,e_2,-e_2,\dots,e_d,-e_d\},$ but $v_n\neq -v_{n-1}.$ Then $\sum_{i=0}^{n-2}2^iv_i$ has no coordinates whose absolute value is at least $2^{n-1}.$ However, the vector $2^{n-1}v_{n-1}+2^nv_n$ has a coordinate whose absolute value is at least $2^n.$ Thus $x$ has a coordinate whose absolute value is larger than $2^{n-1}.$
\end{proof}
\begin{proof}[Proof of Theorem \ref{2.1}]
Since $X_{k+1}\supset X_k$, but the new elements appear increasingly far away, as seen in Lemma \ref{2.5}, for any finite set $Y$ the intersection $X_k\cap Y$ is constant, starting from a certain $k.$ Define the set \begin{equation}
    \label{Xinf} X_\infty := \cup X_i = \bigcup_i \sum_{j=0}^i 2^j\{e_1,-e_1,\dots,e_d,-e_d\}.
\end{equation} Then $X_\infty\cap Y = \lim_{k\to\infty} (X_k\cap Y).$ For example, taking $Y = \{x\pm e_i\}$ for a certain point $x,$ we find out that the number of neighbors of $x$ in $X_\infty$ is the limit of numbers of neighbors in $X_k.$ 

But for any point $x$ the number of its neighbors in $X_k$ is odd only if $x$ is at distance $2^k$ from the zero, as seen in Lemma \ref{2.4}. Thus for any fixed point $x$ the number of neighbors in $X_k$ is even in the long run, and $X_\infty$ contains evenly many neighbors of $x:$
\begin{equation}
    \label{GoodXinf}
    2|\left|\phantom{\frac{1}{2}}X_\infty\cap\{x+e_1,x-e_1,\dots,x-e_d\}\right|.
\end{equation}

Next we'll estimate the number of elements in the cube $[-n;n]^d.$  Remember that $X_k\setminus X_{k-1}\cap [-2^{k-1},2^{k-1}]^d = \emptyset,$ and $$|X_\infty\cap [-2^{k},2^{k}]^d|\leq |X_k| \leq (2d)^{k+1}.$$ On the other hand, $|X_\infty\cap [-2^{k},2^{k}]^d|\geq |X_{k-1}|.$ The sizes of $X_{k}$ are estimated as follows. If $x\in X_k,$ then $x$ is in one of $2d$ copies of $X_{k-1}$: $X_{k-1}+2^{k-1}e_1,X_{k-1}-2^{k-1}e_1,...$ or $X_{k-1}-2^{k-1}e_d$. Thus $|X_k|=2d|X_{k-1}|=(2d)^k.$ Therefore, the cube $[-2^k;2^k]^d$ contains between $(2d)^k$ and $(2d)^{k+1}$ points, and the cube $[-n;n]^d$ contains between $(2d)^{[\log_2(n)]}$ and $(2d)^{2+[\log_2(n)]}$ points. But $(2d)^{\log_2(n)} = n^{\log_2(2d)},$ and the number of points from $X_{\infty}$ inside the cube $[-n;n]^d$ is comparable with $n^{\log_2(2d)}.$ Thus $X_\infty$ has discrete dimension $\log_2(2d) = 1+\log_2d$. Since any point $x$ has an even amount of neighbors in $X_\infty$ by construction (see equation \eqref{GoodXinf}), the characteristic function of the set $X_\infty$ is discrete $\mathbb{Z}_2$-valued harmonic (see Remark \ref{2.1alt}).
\end{proof}
\begin{rem}
\label{2.7}
    $X_\infty$ can also be represented recursively. Indeed, one has $$X_{i}=\sum_{j=0}^i2^j\{e_1,-e_1,\dots,e_d,-e_d\} = \{e_1,-e_1,\dots,e_d,-e_d\}+\sum_{j=1}^i2^j\{e_1,-e_1,\dots,e_d,-e_d\} =$$$$= \{e_1,-e_1,\dots,e_d,-e_d\}+2X_{i-1}.$$ Therefore,  \begin{equation}
        \label{recur} X_\infty = \{e_1,\dots,-e_d\}+2X_\infty = \bigcup_{i=1}^d (2X_\infty\pm e_i).
    \end{equation}
    This formula means that $X_\infty$ is the set of neighbors of $2X_\infty,$ and all points in $X_\infty$ have an odd sum of coordinates. It also means that for any unit vector $v_i$ the sets $X_\infty$ and $X_\infty+v_i$ have no common points.

    Moreover, the union in the formula \eqref{recur} is disjoint. Indeed, suppose that a point $x$ is both in $2X_\infty+ v_i$ and in $2X_\infty + v_j$ for two different unit vectors $v_i$ and $v_j.$ Then $x$ is equivalent mod 2 both to $v_i$ and to $v_j.$ Then $v_i=\pm v_j.$ It means that $v_i = -v_j.$ Therefore, $2X_\infty+2v_i$ and $2X_\infty$ have a common point $x+v_i$. Hence $X_\infty$ and $X_\infty+v_i$  have a common point $\frac{x+v_i}{2}$, which is impossible. This contradiction shows that the union \begin{equation}
        \label{disj}X_\infty = \bigsqcup_{i=1}^d(2X_\infty\pm e_i)
    \end{equation} is indeed disjoint.
\end{rem}
\begin{prop}
    \label{MalinRem}
    The example constructed in the proof of Theorem \ref{2.1} is an example of a discrete-valued harmonic function, but not of a function whose support satisfies the cross condition (see Definition \ref{satis}). However, the set $X_+ = X_\infty+\{0,e_1,-e_1,\dots,e_d,-e_d\},$ where $X_\infty$ is defined by the equation \eqref{Xinf}, satisfies the cross condition and has the same discrete dimension as $X_\infty$. 

\end{prop}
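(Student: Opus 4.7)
The plan is to handle the two assertions separately. The dimension claim is essentially free: from $X_\infty\subseteq X_+\subseteq X_\infty+\{v:\|v\|_\infty\le 1\}$ one obtains $|X_\infty\cap B_n|\le |X_+\cap B_n|\le(2d+1)\,|X_\infty\cap B_{n+1}|$ for every $n$, so Theorem~\ref{2.1} immediately yields that $X_+$ has discrete dimension $\log_2(2d)$.

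For the cross condition, fix $y\in\mathbb{Z}^d$ and argue by cases on the position of $y$. If $y\in X_\infty$, then by construction every neighbour $y\pm e_i$ lies in $X_+$, so the cross meets $X_+$ in $2d+1$ points. If $y\in X_+\setminus X_\infty$, then by the definition of $X_+$ some $y\pm e_i\in X_\infty\subseteq X_+$, giving a second cross-point besides $y$.

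The substantive case is $y\notin X_+$, so that neither $y$ nor any $y\pm e_i$ lies in $X_\infty$. Suppose for contradiction that the cross of $y$ meets $X_+$ in exactly one point; by the sign-change and coordinate-permutation symmetries of $X_\infty$ (inherited by $X_+$), I may assume this point is $y+e_1$. The relation $y+e_1\in X_+$ supplies some $v'\in\{0,\pm e_1,\dots,\pm e_d\}$ with $y+e_1+v'\in X_\infty$. The choices $v'=0$ and $v'=-e_1$ would place $y+e_1$ or $y$ in $X_\infty$, contradicting $y\notin X_+$. The choice $v'=e_j$ with $j\neq 1$ would give $y+e_1+e_j\in X_\infty$, and hence $y+e_j=(y+e_1+e_j)-e_1\in X_+$, contradicting uniqueness; similarly $v'=-e_j$ would force $y-e_j\in X_+$. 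Hence $v'=e_1$, so $y+2e_1\in X_\infty$. The identical check shows that if any other $y+w$ with $\|w\|_1=2$ were in $X_\infty$, then some cross-neighbour $y\pm e_k\neq y+e_1$ would be forced into $X_+$; thus $y+2e_1$ is the only point of $X_\infty$ within $\ell^1$-distance $2$ of $y$.

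To close the argument I will apply the $\mathbb{Z}_2$-harmonicity of $\mathbf{1}_{X_\infty}$ at the point $y+e_1$: its cross-neighbours are $y$, $y+2e_1$, and $y+e_1\pm e_j$ for $j\neq 1$, and all of these lie within $\ell^1$-distance $2$ of $y$. By the preceding paragraph only $y+2e_1$ belongs to $X_\infty$, so $y+e_1$ has exactly one $X_\infty$-neighbour; this odd count contradicts harmonicity and completes the proof. The main delicate step is the distance-$2$ case analysis that pins down $y+2e_1$ as the unique $X_\infty$-witness near $y$; once that is in hand, the harmonicity contradiction is immediate.
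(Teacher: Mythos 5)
Your proof is correct and follows essentially the same route as the paper's: the same case split on where $y$ sits relative to $X_\infty$ and $X_+$, the same elimination of the possible $v'$, the same identification of $y+2e_1\in X_\infty$, and the same use of the evenness of $\#\{\text{$X_\infty$-neighbours of }y+e_1\}$ to get the final contradiction. The only cosmetic difference is that you also observe the (true but unneeded) fact that $y+2e_1$ is the unique $X_\infty$-point at $\ell^1$-distance $2$ from $y$; the paper restricts attention to the neighbours of $y+e_1$, which is all the harmonicity step uses.
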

\begin{proof}
    Suppose that $X_+$ does not satisfy the cross condition. Then there is a point $x$ such that the intersection $X_+\cap \{x,x+e_1,x-e_1,\dots,x+e_d,x-e_d\}$ contains only one element. If $x$ is the only element in this intersection, then $x\in X_+.$ It either means that $x\in X_\infty$ (and $\{x,x+e_1,x-e_1,\dots,x+e_d,x-e_d\}\subset X_+$) or that one of the points $x\pm e_i$ is in $X_\infty$. But the latter implies that both points $x$ and $x\pm e_i$ are in $X_+.$

    Finally, suppose that $x\pm e_i$ is the only  point in $X_+\cap \{x,x+e_1,x-e_1,\dots,x+e_d,x-e_d\}$. Without loss of generality one can assume that this point is $x+e_1.$  Then one of the points $x+e_1\pm e_j$ is in $X_\infty.$ If $j\neq 1,$ then $x+e_1$ and $x\pm e_j$ are both adjacent to $x+e_1\pm e_j$ and therefore are both in $X_+.$ Then the cross centered at $x$ intersects $X_+$ at least in the two points $x+e_1$ and $x\pm e_j$, which contradicts our assumption that the intersection of the set $X_+$ and the cross centered at $x$ contains exactly 1 element. 
    
    Therefore, $j=1,$ and $x+2e_1\in X_\infty.$ But the set $\{x+e_1\pm e_j\}_{j=1}^d$ needs to contain an even amount of points from $X_\infty,$ while the set $\{x+e_1\pm e_j\}_{j=2}^d$ contains none of them. Therefore, $x$ and $x+2e_1$ are both in $X_\infty,$ which is also impossible.  Therefore, the set $X_+\cap \{x,x+e_1,x-e_1,\dots,x+e_d,x-e_d\}$ cannot contain exactly one point.
\end{proof}
\section{Optimality in the harmonic case}
\label{3}
Surprisingly,  the supports of harmonic functions with values in $\mathbb{Z}_2$ defined on $\mathbb{Z}^n$ cannot have discrete dimension lower than $\log_2(n)$. 

First of all, we consider the following metrics defined on $\mathbb{Z}^n:$ $$\delta((x_1,\dots,x_n),(y_1,\dots,y_n)) = \max|y_i-x_i|.$$ Define \begin{equation}
    \label{Nn}N_n(r):= \min_{u:\mathbb{Z}^n\to\mathbb{Z}_2, \Delta u = 0, u(0)=1} |\supp(u)\cap[-r;r]^n|.
\end{equation} This is the lowest possible number of points of $\supp u$ inside a cube of edge length $2r$ centered at a point in $\supp(u).$ 
\begin{thm}
\label{estim}The number $N_n(r)$, defined by the equation \eqref{Nn}, satisfies the inequality $N_n(r)\geq C(n)r^{\log_2(n)}.$ 
\end{thm}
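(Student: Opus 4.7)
The plan is to establish a doubling-type inequality of the form $N_n(2r) \geq n \cdot N_n(r)/C_n$ and iterate it across the dyadic scales $r = 2^k$. Combined with the trivial base case $N_n(1) \geq 1$ (since $0 \in \supp u$ whenever $u(0)=1$), this yields $N_n(2^k) \geq n^k / C_n^k$, which after absorbing the constants into $C(n)$ gives $N_n(r) \geq C(n) r^{\log_2 n}$ for general $r$.

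For the doubling step, I would exploit translation invariance of the class of $\mathbb{Z}_2$-harmonic functions: for every $v \in \supp u$, the shifted function $u(\cdot + v)$ is again $\mathbb{Z}_2$-harmonic with value $1$ at the origin, so $|\supp u \cap (v + [-r, r]^n)| \geq N_n(r)$. The strategy is to locate $\Omega(n)$ support points $v_1, \dots, v_m \in \supp u$ in a cube of size $\sim r$ around the origin such that the translated cubes $v_i + [-r, r]^n$ have controllable mutual overlap (ideally pairwise disjoint, but in any case with overlap bounded by something independent of $N_n(r)$). Summing the lower bounds over the $v_i$ then yields a cube of side comparable to $3r$ containing at least $c n \cdot N_n(r)$ support points, which gives the doubling.

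The scaled harmonicity from Remark \ref{rem2.4} and the proof of Lemma \ref{2.4} provides the tool for locating such branching points. In characteristic $2$ one has $S^{2^k} = \sum_i (x_i^{2^k} + x_i^{-2^k})$, so for every $y \in \mathbb{Z}^n$ and every $k \geq 0$ an even number of the points $\{y \pm 2^k e_i\}_{i=1}^n$ belongs to $\supp u$. Whenever this count is at least $2$ at some scale $2^k \sim r$ around a known support point, we immediately obtain a pair of support points separated by $\ell^\infty$-distance at least $2^k$; recursively applying this around each newly found support point should produce a tree of the required $\Omega(n)$ well-separated points at scale $\sim r$.

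The main obstacle I anticipate is the ``even may be zero'' issue: at some scales and centres, the scaled cross may be entirely disjoint from $\supp u$, stalling the branching. Handling this likely requires pivoting between scales and centres so that, taken together, one produces $\Omega(n)$ well-separated support points at a single unified scale rather than a scattered set at mismatched scales. Whether this is best organised via a careful choice of test polynomials $\phi$ in the identity $\sum_t \chi_X(t)(S\phi)(t)=0$, which translates the multi-scale constraints into a single linear equation on $\chi_X|_{[-r,r]^n}$, or via a direct combinatorial induction on the branching tree, appears to be the technical heart of the argument.
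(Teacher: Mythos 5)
Your plan shares the paper's starting observation (in characteristic $2$, $S^2 = \sum_i x_i^{\pm 2}$, so $u$ restricted to each sublattice $(2\mathbb{Z})^n + w$ is again harmonic), and the rough shape (find several support points nearby, then feed each back into a recurrence). But two genuine gaps remain, and the second is exactly the one you flag without resolving.

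First, the form of your recurrence cannot yield the claimed exponent. A doubling inequality $N_n(2r) \geq n\,N_n(r)/C_n$ iterated over dyadic scales gives $N_n(r) \gtrsim (n/C_n)^{\log_2 r} = r^{\log_2(n/C_n)}$, which is strictly weaker than $r^{\log_2 n}$ unless $C_n=1$. To get the sharp exponent, the paper's recurrence keeps the multiplicative factor exactly $n$: if there are $n$ support points in $B_r(0)$ lying in $n$ pairwise distinct residue classes $\bmod\ 2$, one gets $N_n(r+R)\geq n\,N_n([R/2])$, and the loss in the argument of $N_n$ (not in the prefactor) is absorbed by choosing $r = [M^{3n/(3n+1)}]$ and a multiplicative estimate on $\ln f(M)$ where $f(M)=\min(N_n(M)/M^{\log_2 n}, C')$. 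The extra care is what ensures the constant $C(n)$ does not degrade along the iteration. Incidentally, the paper's ``disjointness'' is algebraic, not geometric: points in different residue classes $\bmod\ 2$ are automatically distinct, so the counts add with no overlap bookkeeping; your proposal to control pairwise overlap of translated cubes $v_i + [-r,r]^n$ at scale $\sim r$ is much harder and not what is needed.

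Second and more seriously, you identify the ``even may be zero'' obstruction and leave it open, whereas this is precisely the technical heart of the paper's argument. The paper's Lemma~\ref{L3.1} establishes the dichotomy: either $\supp(u) \cap B_r(0)$ meets at least $n$ distinct residue classes $\bmod\ 2$ (in which case the factor-$n$ branch above applies), or there is a ``bad'' residue class $w$. Being bad means the cross structure around $w$ forces, via harmonicity, that $u$ is periodic in one coordinate direction on that sublattice; so $\supp u \cap B_r(0)$ contains a full line's worth ($\geq r$ points) of copies of the support of an $(n-1)$-dimensional harmonic function, giving $|\supp u \cap B_r(0)| \geq r\,N_{n-1}([r/2])$. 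This is why the paper's recurrence is $N_n(r+R) \geq \min\left(n\,N_n([R/2]),\, r\,N_{n-1}([r/2])\right)$ and the final argument is an induction on $n$. Without this dimensional-reduction fallback, your branching tree can stall exactly as you fear, and no choice of test polynomials alone patches it. If you want to salvage your sketch, replace the doubling step by the paper's dichotomy lemma, replace geometric disjointness by counting in residue classes, and set up the induction on $n$.
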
 
\begin{proof}
Consider a function $u$ whose support contains the origin. Denote $u$'s support by $X$ and consider the formal power series $$P(x_1,\dots,x_n) = \sum_{(d_1,\dots,d_n)\in X} \prod x_i^{d_i}.$$ Then $u$ is $\mathbb{Z}_2$-harmonic if and only if $SP = 0,$ where $S$ is defined by the equation \eqref{S}. 

Moreover, if $u$ is $\mathbb{Z}_2$-harmonic, then $S^2P = 0.$ Since $S = \sum x_i^{\pm 1},$ $S^2=\sum x_i^{\pm 2},$ and any function $$u_w(d_1,\dots,d_n) := u(2d_1+w_1,\dots,2d_n+w_n)$$ also turns out to be harmonic. This fact allows one to split the lattice $\mathbb{Z}^n$ into $2^n$ sublattices where the function $u$ is also harmonic: \begin{equation}
\label{longharm}
    \sum_{i=1}^n u_w(\arrow{d}+\arrow{e_i})+u_w(\arrow{d}-\arrow{e_i}) = 0.
\end{equation}

Let $r,R$ be arbitrary positive integers until we define them later. If $u$ is known to be nonzero in at least $C$ points $x_w$ that are located at distance at most $r$ from the zero and in pairwise different lattices, then for any positive integer $R$ $$N_n(r+R) = \sum_{w\in\{0;1\}^n} \left|\left\{x\in X|\delta(x,0)\leq r+R\right\}\bigcap \{x|x_i\equiv w_i\bmod 2\}\right| \geq $$$$\geq \sum_{w\in\{0;1\}^n} \left|\left\{x\in X|\delta(x,x_w)\leq R\right\}\bigcap \{x|x_i\equiv w_i\bmod 2\}\right| \geq CN_n(R/2).$$ In other words, \begin{equation}
    \label{NnC}
    N_n(r+R)\geq CN_n([R/2]).
\end{equation}
\begin{lemma}
\label{L3.1}
    For any harmonic function $u$ in the space $\mathbb{Z}^n$ with $n\geq 2$ such that $u(0,\dots,0)\neq 0$ there either exist at least $n$ points in $\supp(u)\cap B_r(0)$ with pairwise distinct remainders $\bmod\,2$ or we find that $|\supp(u)\cap B_r(0)|\geq rN_{n-1}([r/2]).$
\end{lemma}
\begin{proof}
    Consider the set $W$ of vectors $w\in\{0;1\}^n$ such that the support of $u$ contains at least one point in $(2\mathbb{Z})^n+w$ at distance at most $r$ from the zero.

    Call a point $w\in W$ \textit{bad} if there is a set $\{(w+e_i+e_1)\bmod 2,\dots,(w+e_i+e_n)\bmod 2\}$ containing only one point in $W.$ Then for any vector $v\in\mathbb{Z}^n$ such that the cross \\$\bigcup_{j=1}^n\{w+2v+e_i\pm e_j\}$ is inside $[-r;r]^n$ the cross can intersect $\supp u$ only by the points $w+2v$ and $w+2v+2e_i.$  But $u$ is harmonic, and the support of $u$ intersects the set  $\{w+2v+e_i\pm e_j\}_{j=1}^n$ by an even amount of points. Therefore, the support of $u$ contains $w+2v$ iff it contains $w+2v+2e_i.$

    Without loss of generality one can assume that $e_i=e_1.$ Then the function $u(w_1,w_2+2x_2,\dots,w_n+2x_n)$ is harmonic in $\mathbb{Z}^{n-1}\cap B_{r/2}(0)$ and  for any $x_1\in[-r/2;r/2]$ we have \\$u(w_1,w_2+2x_2,\dots) = u(w_1+2x_1,\dots).$ Then we find that $$\{x\equiv w\bmod 2|u(x)\neq 0, x\in[-r;r]^n\} = \left(\{w_1+2x_1\}\cap [-r;r]\right)\times \{(w_2+2x_2,\dots)|u(w_1,w_2+2x_2,\dots)\neq 0\}.$$ Therefore, the number of points in $B_r(0)\cap\supp u$ is at least $$r|\supp u \cap B_r(0)\cap \{(w_1,w_2+2x_2,\dots,w_n+2x_n)\}|\geq rN_{n-1}([r/2]).$$ 

Next we consider the case when there is no bad point $w\in W.$ If the point $(0,\dots,0)$ isn't bad, then the cross $\{e_1\}\cup\{e_1\pm e_i\}$ centered at $e_1$ contains the point $(0;\dots;0)\in\supp u$ and has to contain an extra point $e_1\pm e_i\in\supp(u).$  

    Without loss of generality one can assume that this point in $\supp u$ is $e_1+e_2.$ 

    Since $0$ isn't a bad point in $W,$ then for any $i>2$ there exists a number $j\neq i$ s.t. the set $W$ contains the point $e_i+e_j.$ Similarly, if $e_1+e_2$ isn't a bad point in $W$, then for any $i>2$ there exists a number $j\neq i$ s.t. the set $W$ contains the point $(e_1+e_2+e_i+e_j)\bmod 2.$  Call a point in $W$ \textit{connected} with 0 by the number $i$ if it has the form $e_i+e_j$ for some $j.$ Similarly we call a point in $W$ \textit{connected} with $e_1+e_2$ by the number $i$ if it has the form $(e_1+e_2+e_i+e_j)\bmod 2.$ Any point is connected with $0$ and $e_1+e_2$ by a total of 2 or less numbers. However, $0$ and $e_1+e_2$ are to be connected by at least $n-2$ numbers each. Therefore, if the points $0$ and $e_1+e_2$ both aren't bad, the set $W$ contains at least $n$ points, and $C\geq n.$ 
    \end{proof}

Consider the way to make $|\supp u\cap B_{R+r}(0)| = N_n(r+R).$ If the set $u$ is such that the set $W$ from the lemma has a bad point, \begin{equation}
        \label{bad}N_n(r+R)\geq rN_{n-1}([r/2]).
    \end{equation}

Otherwise the Lemma \ref{L3.1} ensures that there exist at least $n$ points in $\supp(u)\cap B_r(0)$ with pairwise distinct remainders $\bmod\,2$, and in the formula \ref{NnC} we have $C\geq n$.

\begin{lemma}
    For any positive integers $r,R$ \begin{equation}
    \label{curse}
    N_n(r+R)\geq\min(nN_n(R/2), rN_{n-1}([r/2])).
\end{equation}
\end{lemma}
\begin{proof}
    The Lemma \ref{L3.1} and the inequality \ref{NnC} ensure that for any positive integer $r,R$ we either have \begin{equation}
    N_n(r+R)\geq nN_n([R/2])
\end{equation} or
\begin{equation}
    N_n(r+R)\geq rN_{n-1}([r/2]).
\end{equation} Since one of the two inequalities is true, we find the statement of the lemma.
\end{proof}
Next we reformulate the Theorem \ref{estim} and use the recurrence relation \eqref{curse} to prove it by induction on $n.$
\begin{thm*}[\ref{estim}]
    For any dimension $n\in\mathbb{N}$ there exists a constant $C_n>0$ s.t. for any $M>0$ the number $N_n(M)$, defined by the equation \eqref{Nn}, satisfies the inequality $N_n(M)\geq C_nM^{\log_2(n)}.$ 
\end{thm*}

\begin{proof}[Finishing the proof of Theorem \ref{estim} by induction]
Suppose that we know that for any $r>0$ $N_{n-1}(r)\geq C_{n-1}r^{\log_2(n-1)}.$ Then we'll try to find a constant $C_n>0$  s.t. for any $M>0$ $N_{n}(M)\geq C_{n}M^{\log_2(n)}.$

The number $N_n(M)$ for a large positive integer $M$ is estimated as follows. Set $r := [M^{3n/(3n+1)}],$ $ R = M-r$ and apply the inequality \eqref{curse} to find that \begin{equation}
\label{recurs}
    N_n(M)\geq\min\left(nN_n(M/2 - M^{3n/(3n+1)}/2), [M^{3n/(3n+1)}]N_{n-1}([M^{3n/(3n+1)}/2])\right).
\end{equation}

First of all, we prove the induction base for $n=1$ and $n=2.$ Indeed, if $n=2,$ then for any point $x\in\supp u$ one of the points $x+e_1+ e_2,$ $x+e_1-e_2$ or $x+2e_1$ is in $\supp u.$ Consider the sequence $X_k$ defined as follows. $X_0=x,$ $X_{k+1}$ is a point in the set $$(X_k+\{e_1+e_2,e_1-e_2,2e_1\})\cap \supp u.$$ This set is obviously nonempty. Moreover, the point $X_{k+1}$ is located at distance 2 or less from $X_k$, and the point $X_k$ is located at distance at most $2k$ from $X_0.$ Therefore, if $k<r/2,$ the point $X_k$ is in the ball $B_r(x),$ this ball contains at least $r/2$ points, and the Theorem \ref{estim} is true for $n=2.$ For $n=1$ the Theorem yields $N_1(r)\geq C,$ which is trivial.

Next we prove the theorem for $n>2$ assuming that it's true for $n-1.$ Indeed, for any $r>0$ we have $N_{n-1}(r)\geq C_{n-1}r^{\log_2(n-1)},$ and the inequality \eqref{recurs} implies that  $$N_{n}(M)\geq \min\left(nN_{n}([M/2 - [M^{\frac{3n}{3n+1}}]/2]), [M^{3n/(3n+1)}]C_{n-1}[M^{\frac{3n}{3n+1}}]^{\log_2(n-1)}/n\right).$$ But for $n\geq 3$ $$\frac{1+\log_2(n-1)}{\log_2(n)}= 1+\frac{1-\log_2\frac{n}{n-1}}{\log_2(n)} \geq 1+\frac{1/3}{\log_2(n)}>1+\frac{1}{3n}= \frac{3n+1}{3n}.$$ Therefore, $M^{\frac{3n}{3n+1}}C_{n-1}M^{\frac{3n}{3n+1}\log_2(n-1)}/n\geq C_{n-1}M^{\log_2(n)}/n$, and $$[M^{3n/(3n+1)}]C_{n-1}[M^{\frac{3n}{3n+1}}]^{\log_2(n-1)}\geq C'M^{\log_2(n)}$$ for some $C'>0.$

The ratio $\frac{N_n(M)}{M^{\log_2(n)}}$ satisfies another inequality: $$\frac{N_n(M)}{M^{\log_2(n)}}\geq \min\left(\frac{nN_n([M/2 - [M^{\frac{3n}{3n+1}}]/2])}{M^{\log_2(n)}}, C'\right).$$ Obviously, for any $r\in(0;M)$ $M = 2\frac{M}{2} = 2\frac{M-r}{2}\frac{M}{M-r},$ and $$\frac{n}{M^{\log_2(n)}} = \frac{1}{(M/2)^{\log_2(n)}}= \frac{1}{((M-r)/2)^{\log_2(n)}}(1-\frac{r}{M})^{\log_2(n)}.$$ Finally, the two previous formulas imply that $$\frac{N_n(M)}{M^{\log_2(n)}}\geq \min\left(\frac{N_n([M/2 - [M^{\frac{3n}{3n+1}}]/2])}{([M/2 - [M^{\frac{3n}{3n+1}}]/2])^{\log_2(n)}}\left(1-M^{\frac{3n}{3n+1}-1}\right)^{\log_2(n)}, C'\right).$$

Consider the function $f(M) = \min(\frac{N_n(M)}{M^{\log_2(n)}},C').$ Then $$f(M) \geq \min(C',\min\left(\frac{N_n([M/2 - [M^{\frac{3n}{3n+1}}]/2])}{[M/2 - [M^{\frac{3n}{3n+1}}]/2]^{\log_2(n)}}\left(1-M^{\frac{3n}{3n+1}-1}\right)^{\log_2(n)}, C'\right))\geq$$$$\geq \min(C', f([M/2 - [M^{\frac{3n}{3n+1}}]/2])\left(1-M^{\frac{3n}{3n+1}-1}\right)^{\log_2(n)}) = f([M/2 - [M^{\frac{3n}{3n+1}}]/2])\left(1-M^{\frac{3n}{3n+1}-1}\right)^{\log_2(n)}.$$ 

Then $$\ln f(M)\geq \ln f([M/2-[M^{\frac{3n}{3n+1}}/2]]) - \frac{C\log_2(n)}{M^{\frac{1}{3n+1}}}.$$ Denote $X:= \frac{C\log_2(n)}{2^{1/(3n+1)}-1}.$ Then $C\log_2(n)+X = 2^{1/(3n+1)}X,$ and $$\ln f(M) - \frac{X}{M^{\frac{1}{3n+1}}}\geq \ln f([M/2-[M^{\frac{3n}{3n+1}}/2]]) - \frac{C\log_2(n)}{M^{\frac{1}{3n+1}}}-\frac{X}{M^{\frac{1}{3n+1}}}  =$$$$= \ln f([M/2-[M^{\frac{3n}{3n+1}}/2]]) - \frac{X}{(M/2)^{1/(3n+1)}}\geq \ln f([M/2-[M^{\frac{3n}{3n+1}}/2]])-\frac{X}{[M/2-[M^{\frac{3n}{3n+1}}/2]]^{1/(3n+1)}}.$$ Therefore, $\ln f(M) - \frac{X}{M^{\frac{1}{3n+1}}}$ cannot increase as we transition from $M$ to $[M/2-[M^{\frac{3n}{3n+1}}/2]].$ Thus $\ln f(M) - \frac{X}{M^{\frac{1}{3n+1}}}$ is bounded from below, and so is $\ln f(M).$ And $f(M)$ is bounded from below by a constant $C_n>0,$ meaning that $\frac{N_n(M)}{M^{\log_2(n)}}\geq C_n$ and that $$N_n(M)\geq C_nM^{\log_2(n)}$$ for any positive integer $M$. 
\end{proof}
\color{white}
\end{proof}
\color{black}

\section{Optimality for the Malinnikova question}
The example with the logarithmic dimension described above turns out to be almost optimal for the Malinnikova question as well. Indeed, one can prove that  any set $X$ which doesn't intersect any cross $\{x\}\cup\bigcup\{x\pm e_i\}$ by exactly one point cannot have discrete dimension much smaller than the one displayed by the example constructed in Section \ref{2}.
\label{5}
\begin{thm}
\label{mainmal}
    Call a set $X\subset\mathbb{Z}^d$ \textit{supportive} if no cross $\{x\}\cup\{x\pm e_i\}$ intersects $X$ only by one of the points $x\pm e_i$. Then any \textit{supportive} set has discrete dimension at least $\log_2(d)-7.$
\end{thm}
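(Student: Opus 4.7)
The plan is to mirror the inductive strategy of Section \ref{3}, weakening the $\mathbb{Z}_2$-harmonic hypothesis to the supportive one at the cost of an absolute multiplicative constant (which will eventually supply the constant $7$). Let
$$\wt N_d(r):=\min\bigl\{\,|X\cap[-r,r]^d|\;:\;X\subset\mathbb{Z}^d\text{ supportive},\ 0\in X\,\bigr\}.$$
The supportive condition is preserved by the sublattice operation $\vec y\mapsto 2\vec y+w$: if $X$ is supportive in $\mathbb{Z}^d$ then each coset $w+(2\mathbb{Z})^d$ of $X$, pulled back via this rescaling, is again supportive. Hence the same counting that produced \eqref{NnC} yields $\wt N_d(r+R)\ge C\cdot \wt N_d([R/2])$ whenever $X\cap B_r(0)$ meets at least $C$ distinct parity classes.

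The crucial step is a weakened analogue of Lemma \ref{L3.1}: for any supportive $X$ with $0\in X$ and any $r\ge 2$, either $X\cap B_r(0)$ meets at least $d/128$ parity classes, or there is a coordinate direction $e_i$ and a parity class $w$ such that the slice $\{(x_2,\dots,x_d):(w_1,w_2+2x_2,\dots,w_d+2x_d)\in X\}\subset\mathbb{Z}^{d-1}$ is itself supportive in dimension $d-1$ and persists along an interval of length $\gtrsim r$ in the $e_i$-direction. In the second alternative the slicing argument of Lemma \ref{L3.1} produces $|X\cap B_r(0)|\ge r\cdot \wt N_{d-1}([r/2])$. Combining the two cases gives
$$\wt N_d(r+R)\ge\min\bigl(\,(d/128)\,\wt N_d([R/2]),\; r\,\wt N_{d-1}([r/2])\,\bigr),$$
after which the induction on $M$ in Section \ref{3} runs verbatim with exponent $\log_2(d/128)=\log_2(d)-7$ and the same split $r:=[M^{3d/(3d+1)}]$, $R=M-r$, yielding the theorem.

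The main obstacle is establishing the dichotomy. The harmonic case of Lemma \ref{L3.1} exploited that every dual cross intersects $X$ in an \emph{even} number of points, which allowed the ``bad point'' argument linking parity classes $0$ and $e_1+e_2$ through edges $\{e_i+e_j\}$. The supportive hypothesis is strictly weaker: a dual cross may be met by $X$ only at its center, and this loss of evenness breaks the literal parity game. I would compensate by splitting, at each cross center $y$, the easy sub-case $y\in X$ (which reproduces the harmonic propagation through the supportive crosses at $y\pm e_i$) from the harder sub-case where only one arm lies in $X$; the supportive condition still forces a second arm, so the pairs of arms reinstate a weakened parity graph in which each ``bad'' center can kill only $O(1)$ parities. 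A careful tally of this loss is where the absolute constant $2^7$ enters, and getting it to be \emph{dimension-independent} is the combinatorial heart of the argument; the remainder of the proof is an exact copy of the Section \ref{3} $f(M)$-trick.
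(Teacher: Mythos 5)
The proposal's foundational claim --- that the supportive condition is preserved by the sublattice operation $\vec y\mapsto 2\vec y+w$ --- is false, and this is fatal to the plan of re-running Section~\ref{3}. A counterexample already exists in $\mathbb{Z}^1$: the set $X=2\mathbb{Z}\cup\{1\}$ is supportive (every cross meeting it meets it in at least two points or only in its center), yet the odd-coset restriction $X_1=\{y:2y+1\in X\}=\{0\}$ is a singleton, and the cross centered at $1$ intersects $\{0\}$ only in the arm $0$, violating the supportive condition. The rescaling step in Section~\ref{3} is not a soft counting fact; it rests on the algebraic identity $S^2=\sum x_i^{\pm 2}$ in characteristic $2$, which is specific to $\mathbb{Z}_2$-harmonicity. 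The supportive condition is strictly weaker and carries no such structure, so neither the inequality \eqref{NnC} nor the bad-point slicing of Lemma~\ref{L3.1} survives the weakening. You acknowledge that the dichotomy is the ``combinatorial heart'' and that you have not established it, but the gap is deeper than a missing lemma: the entire inductive scaffolding has no analogue here.

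The paper instead proves Theorem~\ref{mainmal} by a completely different, rescaling-free route. The marching Lemma~\ref{4.2} shows that from any $x\in X$ and any direction $e$ one can find $P(x,e,r)\in X$ with $(P(x,e,r)-x,e)\in\{r,r+1\}$ and Manhattan distance $\le r$ from $x+re$, using only the pointwise supportive condition. Iterating this along $k$-good sequences $a$ of geometrically decaying radii $r_n=2^{n+2}-2$ produces endpoints $Q_n(x,a)$; Lemmas~\ref{54} and \ref{4.5} then bound the collision multiplicity by roughly $2^{7n}$, while there are $\gtrsim(2d-2k)^n$ admissible sequences, giving $\gtrsim(d/2^7)^n$ distinct points of $X$ within distance $O(2^n)$ of $x$. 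The constant $7$ in the exponent comes from the collision bound, not from a degraded version of a parity-class count. If you want to recover the theorem, you must work at the level of the marching construction rather than try to port the harmonic induction.
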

\begin{rem}
    Any set satisfying the cross condition (see Definition \ref{satis}) is supportive. In addition, for any function $u$ defined on $\mathbb{Z}^d$ with values in any field such that $\Delta u+Vu = 0$ we find that $\supp u$ is supportive. However, the condition of being supportive is weaker than the cross condition; for example, the set $X_\infty$ defined by the equation \eqref{Xinf} or \eqref{disj} is the support of a harmonic function with values in $\mathbb{Z}_2,$ but $X_\infty$ doesn't satisfy the cross condition, since any cross centered at a point from $X_\infty$ doesn't contain any other points from this set.  
\end{rem}
\subsection{Start of proof}
\begin{proof}[Proof of Theorem \ref{mainmal}]
In our proof we consider two
different metrics: the Manhattan distance $\|x-y\| = \sum|y_i-x_i|$ and the maximal-difference distance $\delta(x,y):= \max|y_i-x_i|.$
    \begin{lemma}
    \label{4.2}
        For any supportive set $X$, point $x\in X,$ positive integer $r$ and unit vector $e\in\mathbb{Z}^d$ there exists a point $y\in X$ such that $(y-x,e) \in\{r,r+1\}$ while the Manhattan distance between $y$ and $x+re$ is at most $r.$
    \end{lemma}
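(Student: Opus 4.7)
The plan is to march greedily from $x$ in the $+e$-direction through $X$, paying at most one unit of lateral cost per unit of forward progress. The engine of the march is the supportive property applied to crosses that already contain one point of $X$, namely the current position. Without loss of generality take $e = e_1$. Set $x_0 := x$, and at each stage, given $x_j \in X$ with $e_1$-progress $p_j := (x_j - x, e_1) < r$, inspect the cross centered at $x_j + e_1$. One of its spokes is $x_j$, which lies in $X$. Since $X$ is supportive, the cross cannot meet $X$ in the single spoke $\{x_j\}$, so at least one of the following must occur: (a) the center $x_j + e_1 \in X$; (b) the opposite spoke $x_j + 2 e_1 \in X$; or (c) some transverse spoke $x_j + e_1 + v \in X$ with $v \in \{\pm e_2, \dots, \pm e_d\}$. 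I take $x_{j+1}$ to be the point of $X$ singled out by whichever case applies (any one, if several do). The $e_1$-progress increments by $1, 2, 1$ and the perpendicular (Manhattan) displacement is $0, 0, 1$ in the three cases, respectively.

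I iterate until the first index $k$ with $p_k \geq r$. Because each step raises $p$ by at most $2$, we automatically land in $p_k \in \{r, r+1\}$, which is the first requirement on $y := x_k$. For the Manhattan bound, $\|x_k - (x + r e_1)\| = |p_k - r| + L$, where $L$ is the accumulated perpendicular displacement. Let $k_a, k_b, k_c$ count the steps of each type through time $k$; then $p_k = k_a + 2 k_b + k_c$ and $L \leq k_c$. If $p_k = r$, then $L \leq k_c \leq k_a + k_c = r - 2 k_b \leq r$, giving Manhattan distance $\leq r$. If $p_k = r+1$, the final step had to be of type (b) (otherwise the march would already have halted at $p = r$), so $k_b \geq 1$; the same count yields $L \leq k_c \leq r + 1 - 2 k_b \leq r - 1$, hence distance $\leq 1 + (r-1) = r$.

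The only delicate point is the overshoot case $p_k = r+1$: it can arise only through a type-(b) step, which forces $k_b \geq 1$ and shortens the allotment of type-(c) steps by one, just barely preserving the bound $r$ rather than $r+1$. Everything else is a routine greedy construction, so I expect the main obstacle to be phrasing the trichotomy cleanly; once that is isolated, the Manhattan estimate is a one-line count.
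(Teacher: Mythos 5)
Your proposal is correct and follows essentially the same greedy march as the paper's proof: at each step, apply the supportive condition to the cross centered one unit forward of the current point to advance while controlling lateral drift. The only difference is bookkeeping—you count the three step types explicitly, whereas the paper observes directly that each step cannot increase the Manhattan distance to $x+re$ as long as the forward progress is still below $r$.
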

    \begin{proof}
        For any point $x\in X$ the cross centered at $x+e$ contains not just $x$, but also another point. Denote this other point by $P(x,e,1).$ This point is equal to $x+e,x+2e$ or $x+e\pm e'.$ Consider the points $$\wt{P}(x,e,n) = \begin{cases}x,& n=0\\P(\wt{P}(x,e,n-1),e,1),& n>0\end{cases}.$$ While $(\wt{P}(x,e,n)-x,e)<r$, we cannot increase the Manhattan distance to $x+re$ by moving from $\wt{P}(x,e,n)$ to $\wt{P}(x,e,n+1)$. Therefore, if we repeatedly move from $\wt{P}(x,e,n)$ to $\wt{P}(x,e,n+1),$ we'll encounter the necessary point when $(\wt{P}(x,e,n)-x,e)$ exceeds $r.$
    \end{proof}
    Next we define $P(x,e,r)$ to be a point $y$ mentioned in the lemma and define $A(x,e,r):= P(x,e,r)-x-re.$ We will use these points to construct lots of different points that are close to $x.$ Notice that \begin{equation}
        \label{scalars}
        (A(x,e,r), e)\in\{0;1\}
    \end{equation} and 
    \begin{equation}
        \label{dists}
        \|A(x,e,r)\|\leq r.
    \end{equation}
    Consider the $2d$ unit vectors $e_1,-e_1,\dots,e_d,-e_d.$ For $m\in\{1,\dots,2d\}$ and $i\in \{1,\dots,d\}$ we define $\epsilon_m:= (-1)^{m+1}e_{[(m+1)/2]}$ and $P_i(x,r):= P(x,\epsilon_i,r).$ Next we consider the sequence of radii $r_n = 2^{n+2}-2.$ For a point $x$ we consider a sequence $a = \{a_i\}_{i=0}^{n-1}$ of $n$ numbers within the set $\{1,\dots,2d\}.$ Denote the length of the sequence $a$ by $n(a)$ and consider the points $$\begin{cases}Q_0(x,a) := x\\Q_{m+1}(x,a) := P_{a_{m}}(Q_m(x,a),r_{n(a)-m})\end{cases}.$$ What we want is the existence of many different sequences $a$ of the same length $n(a)$ such that the points $Q_{n(a)}(x,a)$ rarely end up the same.

    \begin{lemma}
    \label{54}
        Call two integers $b$ and $c$ \textit{opposing} if they are different, but $[\frac{b+1}{2}] = [\frac{c+1}{2}].$ Then for any sequences $a^1$ and $a^2$ of length $n$ such that their first different elements are opposing, $Q_n(x,a^1)\neq Q_n(x,a^2).$  
    \end{lemma}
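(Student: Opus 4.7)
The plan is to project both walks onto a single direction and show their projections cannot agree. Let $k$ be the smallest index with $a^1_k\neq a^2_k$; by hypothesis $a^1_k$ and $a^2_k$ are opposing, so there exists a unit vector $e$ with $\epsilon_{a^1_k}=e$ and $\epsilon_{a^2_k}=-e$ (or vice versa). Because the sequences coincide on positions $0,\dots,k-1$, we have $Q_k(x,a^1)=Q_k(x,a^2)$, so the two walks emanate from a common point and diverge at step $k+1$ (which uses radius $r_{\ell}$ where $\ell:=n-k$).

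The key step is a case analysis of the $e$-component of a single step $P_{a_m}(\cdot,r)-Q_m$, using \eqref{scalars} and \eqref{dists}. For a step in direction $\epsilon=\pm e$ the formula $(A(x,\epsilon,r),\epsilon)\in\{0,1\}$ forces the $e$-component of the displacement to lie in $[r,r+1]$ or $[-r-1,-r]$. For a step in direction $\epsilon\perp e$ the $e$-component equals $(A,e)$, and since $\|A\|\le r$ we get $|(A,e)|\le r$. Hence any step of radius $r$ contributes to the $e$-coordinate an amount in $[-r-1,r+1]$, while a step of radius $r$ in direction $\pm e$ contributes at least $r$ (resp.\ at most $-r$) to the $e$-coordinate.

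Applying these bounds at step $k+1$, the $e$-coordinates of $Q_{k+1}(x,a^1)$ and $Q_{k+1}(x,a^2)$ differ by at least $2r_{\ell}$. The remaining steps have radii $r_{\ell-1},r_{\ell-2},\dots,r_1$, so the absolute drift in the $e$-coordinate of each walk during the subsequent $\ell-1$ steps is bounded by $\sum_{j=1}^{\ell-1}(r_j+1)$. The whole argument therefore reduces to verifying the arithmetic inequality
$$2r_{\ell}\;>\;2\sum_{j=1}^{\ell-1}(r_j+1),$$
which is exactly where the choice $r_n=2^{n+2}-2$ is used: one computes
$$\sum_{j=1}^{\ell-1}(r_j+1)=\sum_{j=1}^{\ell-1}(2^{j+2}-1)=2^{\ell+2}-\ell-7,$$
and hence the final $e$-coordinate gap is at least
$$2(2^{\ell+2}-2)-2(2^{\ell+2}-\ell-7)=2\ell+10>0.$$
This gives $Q_n(x,a^1)\neq Q_n(x,a^2)$, as required. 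The main obstacle is conceptual rather than technical: realising that the exponentially growing schedule $r_n=2^{n+2}-2$ is tailored so that the divergence at step $k+1$ outweighs all possible later compensation, and that projecting onto $e$ rather than looking at the full position is what makes this visible through the clean bounds in \eqref{scalars}.
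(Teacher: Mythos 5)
Your proof is correct and follows essentially the same strategy as the paper: establish a gap of $2r_\ell$ at the first divergent step, then bound the maximum possible later drift by the sum of subsequent radii (plus one per step), and finally verify that the geometric growth of the $r_j$ makes the initial gap dominate. The only cosmetic difference is that you track the single $e$-coordinate directly and carry out the arithmetic explicitly, whereas the paper phrases the same estimate via the max-coordinate metric $\delta$ and the cruder bound $r_n - r_{n-1} - \dots - r_0 > n$; both reduce to the same telescoping inequality.
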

    \begin{proof}
        Without loss of generality one can assume that the first different element of the two sequences is $a^1_0\neq a^2_0.$ 
        
        The maximal difference distance $\delta$ between $Q_1(x,a^1)$ and $Q_1(x,a^2)$ is $2r_n$, while the maximal difference distance between $Q_l(x,a)$ and $Q_{1+l}(x,a)$ is either $r_{n-l}$ or $r_{n-l}+1.$ Since $r_n-r_{n-1}-\dots-r_0>n,$ we find that $Q_n(x,a^1)\neq Q_n(x,a^2).$ 
    
    \end{proof}
\begin{defn}
\label{kgood}
    A sequence $a$ is called $k$-good if for any $i,j$ such that $|i-j|<k$ the numbers $\lceil a_i/2\rceil$ and $\lceil a_j/2\rceil$ are different. 
\end{defn}
\begin{rem}
    For a $k$-good sequence $a$ and any $i,j$ such that $|i-j|<k$ the vectors $\epsilon_{a_i}$ and $\epsilon_{a_j}$ are orthogonal.
\end{rem}
    First of all, we fix a number $k$ that we'll specify later and suppose that there exist $l$ different $k$-good sequences $a^1,\dots,a^l$ of the same length $n = n(a^i)$ such that $$Q_n(x,a^1)=Q_n(x,a^2)=\dots=Q_n(x,a^l),$$ while the numbers $a^i_0$ are pairwise different.
    Lemma \ref{54} ensures that if the points $Q_n(x,a^i)$ coincide and the numbers $a^i_0$ are pairwise different, then the vectors $\epsilon_{a_0^i}$ are orthogonal.

    On the other hand, $$\|Q_n(x,a)-x\| \leq \sum_{m=1}^{n} \|Q_m(x,a)-Q_{m-1}(x,a)\|\leq 2r_n+2r_{n-1}+\dots+2r_1\leq 4r_n.$$ Since $Q_n(x,a^1)=\dots=Q_n(x,a^l),$ while the vectors $\epsilon_{a^1_0},\dots,\epsilon_{a^l_0}$ are orthogonal unit vectors, 
    \begin{equation}
        \label{prods}\sum_{i=1}^l |(Q_n(x,a^i)-x,\epsilon_{a^i_0})| \leq \|Q_n(x,a^i)-x\|\leq 4r_n.
    \end{equation} 
Define \begin{equation}
    \label{Psi}
    \Psi(x,a^i_0):= (Q_n(x,a^i)-x,\epsilon_{a^i_0}).
\end{equation} We have just shown that these expressions cannot simultaneously have large values. However, if $\Psi(x,a^i_0)$ is small, then any sequence $a'$ of length $n$ such that  $Q_n(x,a')=Q_n(x,a^i)$ must \textit{coincide} with $a^i$ for a long time, as we'll see below.
\subsection{The main lemma in the proof of Theorem \ref{mainmal}}
\begin{lemma}
\label{4.5}
    If $k\geq 3,$ the sequences $a$ and $a'$ have length $n$ and are $k$-good, $Q_n(x,a) = Q_n(x,a')$ and $a'_0=a_0,$ then the lowest number $m$ such that $a_m\neq a'_m$ satisfies the inequality $$m\geq -\log_2\left(\frac{2^{-n}+2^{2-k}+2\max(0,\Psi(x,a_0'))/r_n}{1-6\cdot 2^{-k}}\right)-1.$$ In other words, $a_m'=a_m$ for any $$m< -\log_2\left(\frac{2^{-n}+2^{2-k}+2\max(0,\Psi(x,a_0'))/r_n}{1-6\cdot 2^{-k}}\right)-1.$$
\end{lemma}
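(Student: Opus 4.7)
The plan is to project the identity $Q_n(x,a)=Q_n(x,a')$ onto $\epsilon_{a_m}$ and use the $k$-goodness to kill most of the error, extracting a quantitative lower bound on $m$. Since $a_i=a'_i$ for $i<m$, the two constructions coincide up to step $m$; set $y:=Q_m(x,a)=Q_m(x,a')$. Writing each step as $Q_{i+1}-Q_i=r_{n-i}\epsilon_{a_i}+A_i$, with $\|A_i\|\le r_{n-i}$ and $(A_i,\epsilon_{a_i})\in\{0,1\}$ by \eqref{scalars} and \eqref{dists}, the hypothesis $Q_n(x,a)=Q_n(x,a')$ becomes
\[
\sum_{i=m}^{n-1}r_{n-i}(\epsilon_{a_i}-\epsilon_{a'_i})\;=\;\sum_{i=m}^{n-1}\bigl(A_i(a')-A_i(a)\bigr).
\]

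Taking the inner product with $\epsilon_{a_m}$, the $i=m$ principal contribution on the left equals $r_{n-m}\bigl(1-(\epsilon_{a'_m},\epsilon_{a_m})\bigr)\ge r_{n-m}$ since $a_m\neq a'_m$ (it equals $2r_{n-m}$ if $a_m$ and $a'_m$ are opposing and $r_{n-m}$ if they lie on different axes). For $m<i<m+k$, $k$-goodness of both $a$ and $a'$ forces $(\epsilon_{a_i},\epsilon_{a_m})=(\epsilon_{a'_i},\epsilon_{a_m})=0$, killing those principal terms. For $i\ge m+k$, the geometric tail bound
\[
\sum_{i\ge m+k} r_{n-i}\;=\;\sum_{j=1}^{n-m-k}r_j\;\le\;2^{2-k}\,r_{n-m}
\]
accounts for the $2^{2-k}$ contribution in the final inequality.

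The delicate part is bounding the aggregated $A$-error $\bigl|\sum_{i\ge m}(A_i(a')-A_i(a),\epsilon_{a_m})\bigr|$. The trivial Manhattan bound $\|A_i\|\le r_{n-i}$ summed over $i\ge m$ already reaches order $r_{n-m}$, so it would drown the main term. To refine it, I decompose $A_i=c_i\epsilon_{a_i}+B_i$ with $c_i\in\{0,1\}$ and $\|B_i\|\le r_{n-i}-c_i$, exploiting that for $|i-m|<k$ the piece $c_i\epsilon_{a_i}$ is orthogonal to $\epsilon_{a_m}$. To bring $\Psi$ into play, I then project the same equation onto $\epsilon_{a_0}=\epsilon_{a'_0}$ and use $(Q_n(x,a)-x,\epsilon_{a_0})=(Q_n(x,a')-x,\epsilon_{a_0})=\Psi(x,a'_0)$: this second projection controls the excess positive $A$-flux in the $\epsilon_{a_m}$-direction in terms of the "flux" $\Psi$ along $\epsilon_{a_0}$, producing the term $2\max(0,\Psi(x,a'_0))$. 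Combining both projections with the orthogonality and tail bounds yields
\[
r_{n-m}\bigl(1-6\cdot 2^{-k}\bigr)\;\le\;2^{2-k}\,r_n+2\max\bigl(0,\Psi(x,a'_0)\bigr)+2^{-n}r_n.
\]
Dividing by $r_n$ and using the easy inequality $r_{n-m}/r_n\ge 2^{-m-1}$ (valid for $m\le n$ since $r_n=2^{n+2}-2$) gives the stated lower bound on $m$ upon taking $-\log_2$.

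The main obstacle is the coupled bookkeeping of the two projections: the $\epsilon_{a_m}$-projection produces the principal inequality but leaves the $A$-error uncontrolled, while the $\epsilon_{a_0}$-projection controls this error only up to $\max(0,\Psi)$ and an additional leakage from the "far band" $i\ge m+k$. Extracting the exact constants $1-6\cdot 2^{-k}$ and $2^{2-k}$, where the coefficient $6$ appears because leakages add across both projections and across the $i<m$ and $i\ge m$ halves, requires combining the bounds in tandem rather than estimating each $A$-contribution individually. The hypothesis $k\ge 3$ is precisely what ensures $1-6\cdot 2^{-k}>0$, so that the rearranged inequality gives a meaningful bound on $m$.
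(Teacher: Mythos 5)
Your high-level plan is the same as the paper's (compare the $\epsilon_{a_m}$-component of $Q_n(x,a)-Q_m(x,a)$ and $Q_n(x,a')-Q_m(x,a')$, kill near-band terms with $k$-goodness, bound the far tail geometrically, and bring in $\Psi$ via the $\epsilon_{a_0}$-direction to control the $A$-error), and you arrive at the same target inequality. But two steps in the middle are genuinely wrong or missing.

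First, the claim that ``for $m<i<m+k$, $k$-goodness of both $a$ and $a'$ forces $(\epsilon_{a'_i},\epsilon_{a_m})=0$'' is false. $k$-goodness of $a'$ only constrains $\lceil a'_i/2\rceil$ relative to other entries of $a'$, and since $a'_m\neq a_m$ the constraint does not exclude $\lceil a'_i/2\rceil=\lceil a_m/2\rceil$ for some $i$ in $(m,m+k)$. What one can say, and what the paper says, is that among $i>m$ the nonzero values of $(\epsilon_{a'_i},\epsilon_{a_m})$ are spaced at least $k$ apart and the first one occurs at $i\ge m+1$, so they contribute at most about $r_{n-m}(1/2+2^{-k})$. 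Because of this, a single projection onto $\epsilon_{a_m}$ only yields a lower bound of order $r_{n-m}(1/2-3\cdot 2^{-k})$, not $r_{n-m}(1-6\cdot 2^{-k})$. The paper recovers the full factor by also projecting onto $\epsilon_{a'_m}$ (orthogonal to $\epsilon_{a_m}$ by Lemma \ref{54}, since the two cannot be opposing when $Q_n$ agrees) and adding the two scalar bounds; this second projection is absent from your sketch, and without it the claimed final inequality does not follow.

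Second, the $A$-error is handled too vaguely. Decomposing $A_i=c_i\epsilon_{a_i}+B_i$ removes a bounded piece but leaves $\|B_i\|\le r_{n-i}$ with no control on $(B_i,\epsilon_{a_m})$; summed over $i$ this is of order $r_{n-m}$ and drowns the main term. The paper's device here is the Manhattan-norm identity $\|\mathrm{Proj}(v)\|=\|v\|-|(v,\epsilon_{a_0})|$ for the projection onto the hyperplane $\perp\epsilon_{a_0}$: combining it with \eqref{dists} and with the lower bound \eqref{21} on $|(\sum A_w,\epsilon_{a_0})|$ gives the crucial estimate $\sum_w\|\mathrm{Proj}(A_w)\|\le\Psi(x,a_0)+2^{1-k}r_n+1$, which then bounds the $\epsilon_{a_m}$- and $\epsilon_{a'_m}$-components of the aggregated error. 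Your phrase ``this second projection controls the excess $A$-flux'' gestures at this but does not produce the inequality. Finally, the argument implicitly assumes $\epsilon_{a_m}\perp\epsilon_{a_0}$, which is guaranteed by $k$-goodness only for $m<k$; you should note, as the paper does in its Case 2, that for $m\ge k$ the right-hand side of the claimed bound is already $<k\le m$ and the statement is trivial.
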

\begin{proof}
    First of all, \begin{equation}
    \label{Qx}
        Q_n(x,a)-x = \sum_{w=0}^{n-1} \epsilon_{a_w}r_{n-w}+\sum_{w=0}^{n-1} A(Q_w(x,a),r_{n-w},\epsilon_{a_w}).
    \end{equation} The former sum is such that its scalar product with $\epsilon_{a_0}$ is at least $(1-2^{1-k})r_n:$ \begin{equation}
    \label{epsilons}
    \left(\sum_{w=0}^{n-1} \epsilon_{a_w}r_{n-w}, \epsilon_{a_0}\right) = \sum (\epsilon_{a_w},\epsilon_{a_0})r_{n-w} \geq r_n+0r_{n-1}+\dots+0r_{n-k+1}-r_{n-k}-\dots \geq (1-2^{1-k})r_n.
    \end{equation} Therefore, the equations \eqref{Qx},\eqref{epsilons} and \eqref{Psi} imply that \begin{equation}
    \label{Ae}
        \sum_{w=0}^{n-1} \left(A(Q_w(x,a),r_{n-w},\epsilon_{a_w}),\epsilon_{a_0}\right)= (Q_n(x,a)-x,\epsilon_{a_0})-\left(\sum_{w=0}^{n-1} \epsilon_{a_w}r_{n-w}, \epsilon_{a_0}\right)\leq \Psi(x,a_0)-(1-2^{1-k})r_n.
    \end{equation} But $A(x,r_n,\epsilon_{a_0})$ is almost orthogonal to $\epsilon_{a_0}$ due to the equation \eqref{scalars}. Therefore, the equations \eqref{scalars} and \eqref{Ae} imply that \begin{equation}
    \label{21}
        \left(-\epsilon_{a_0},\sum_{w=1}^{n-1} A(Q_w(x,a),r_{n-w},\epsilon_{a_w})\right)\geq (1-2^{1-k})r_n-\Psi(x,a_0)-1.
    \end{equation}
    Define $Proj(v)$ to be the projection of the vector $v$ on the hyperplane orthogonal to $\epsilon_{a_0}$ and recall that $\|v\|$ is the Manhattan norm of the vector $v.$  For any vector $v$ $$\|Proj(v)\| = \|v\| - |(-\epsilon_{a_0},v)|.$$ If we sum those equalities for the vectors $ A(Q_w(x,a),r_{n-w},\epsilon_{a_w}),$ we find that \begin{multline}
    \label{23}
        \sum_{w=1}^{n-1}\left\|Proj\left( A(Q_w(x,a),r_{n-w},\epsilon_{a_w})\right)\right\| =\\= \sum_{w=1}^{n-1}\left\| A(Q_w(x,a),r_{n-w},\epsilon_{a_w})\right\|        
        -\sum_{w=1}^{n-1}\left|\left(-\epsilon_{a_0}, A(Q_w(x,a),r_{n-w},\epsilon_{a_w})\right)\right|.
    \end{multline} The equation \eqref{dists} ensures that $$\sum_{w=1}^{n-1}\left\| A(Q_w(x,a),r_{n-w},\epsilon_{a_w})\right\|\leq \sum_{w=1}^{n-1} r_{n-w}\leq  r_n,$$ while the equation \eqref{21} ensures that $$\sum_{w=1}^{n-1}\left|\left(-\epsilon_{a_0}, A(Q_w(x,a),r_{n-w},\epsilon_{a_w})\right)\right|\geq (1-2^{1-k})r_n-\Psi(x,a_0)-1.$$ Therefore, the equation \eqref{23} ensures that $$\sum_{w=1}^{n-1}\left\|Proj\left( A(Q_w(x,a),r_{n-w},\epsilon_{a_w})\right)\right\|\leq r_n-(1-2^{1-k})r_n+\Psi(x,a_0)+1 = 2^{1-k}r_n+\Psi(x,a_0)+1$$In other words,
    \begin{equation}
        \label{proji}
        \sum_{l=1}^{n-1} \|Proj(A(Q_l(x,a),r_{n-l},\epsilon_{a_l}))\|\leq \Psi(x,a_0)+2^{1-k}r_n+1.
    \end{equation}
    Similarly to the sequence $a,$ the same conclusion holds for the sequence $a'$: $$Q_n(x,a')-Q_1(x,a') = \sum_{l=1}^{n-1} \epsilon_{a_l'}r_{n-l}+\sum_{l=1}^{n-1} A(Q_l(x,a'),r_{n-l},\epsilon_{a_l'}).$$ Reasoning similar to the equations \eqref{21}---\eqref{proji} allows us to prove that
    \begin{equation}
        \label{projs}
        \sum_{w=1}^{n-1}\|Proj(A(Q_w(x,a'),r_{n-w},\epsilon_{a_w'}))\|\leq \Psi(x,a'_0)+2^{1-k}r_n+1.
    \end{equation}

    On the other hand, the sequences $a'$ and $a$ start with the same number $a'_0=a_0,$ and  $\epsilon_{a_0} = \epsilon_{a'_0}.$ In addition, the first point where the sequences $a$ and $a'$ differ is denoted by $m$. Then  $Q_m(x,a)=Q_m(x,a'),$ and $Q_{m+1}$ is the first point that differs for $a$ and $a'.$ The difference between $Q_n(x,a')$ and $Q_m(x,a')$ is defined by the formula $$Q_n(x,a')-Q_m(x,a') = \sum_{w=m}^{n-1} \epsilon_{a_w'}r_{n-w}+\sum_{w=m}^{n-1} A(Q_w(x,a'),r_{n-w},\epsilon_{a_w'}).$$ The  inequality \eqref{projs} lets one estimate the latter sum's projection on the hyperplane orthogonal to $\epsilon_{a'_0}$: $$\left\|Proj\left(\sum_{w=m}^{n-1} A(Q_w(x,a'),r_{n-w},\epsilon_{a_w'})\right)\right\|\leq \sum_{w=1}^{n-1}\left\|Proj\left( A(Q_w(x,a'),r_{n-w},\epsilon_{a_w'})\right)\right\|\leq \Psi(x,a_0')+2^{1-k}r_n+1.$$
    
    Define \begin{equation}
    \label{24}
        R:=Q_m(x,a)+\sum_{w=m}^{n-1}\epsilon_{a_w}r_{n-w},
    \end{equation}
    \begin{equation}
    \label{25}
        R':=Q_m(x,a')+\sum_{w=m}^{n-1}\epsilon_{a_w'}r_{n-w}.
    \end{equation} Then project $R,R',Q_n(x,a)$ onto the same hyperplane orthogonal to $\epsilon_{a_0}.$ Since $$Q_n(x,a') - R' =\sum_{w=m}^{n-1} A(Q_w(x,a'),r_{n-w},\epsilon_{a_w'}),$$ the  inequality \eqref{projs} implies that $$\|Proj(R'-Q_n(x,a'))\|\leq \Psi(x,a'_0)+2^{1-k}r_n+1.$$ Since the sequences $a$ and $a'$ start with the same number and since $Q_n(x,a)=Q_n(x,a'),$ we have $\Psi(x,a_0)=\Psi(x,a'_0)$, and the inequality \eqref{proji} ensures that $$\|Proj(R-Q_n(x,a))\|\leq \Psi(x,a'_0)+2^{1-k}r_n+1.$$ Therefore, the projections of $R'$ and $R$ are at distance $2\Psi(x,a'_0)+2^{2-k}r_n+2$ or less from each other.

    However, the points $Q_m(x,a)$ and $Q_m(x,a')$ are the same because the sequences $a$ and $a'$ have the same first $m$ members. Therefore, 
    \begin{equation}
        \label{between}\left\|Proj\left(\sum_{w=m}^{n-1}\epsilon_{a_w}r_{n-w}-\sum_{w=m}^{n-1}\epsilon_{a_w'}r_{n-w}\right)\right\| = \left\|Proj\left(R-R'\right)\right\|\leq 2\Psi(x,a'_0)+2^{2-k}r_n+2.
    \end{equation}
    Next we consider two cases. 
    
    \textbf{Case 1: $m<k$.} Then the vector $\epsilon_{a_0}$ is orthogonal to $\epsilon_{a_m}$ and to $\epsilon_{a_m'}.$ Consider the scalar products $\left(\sum_{w=m}^{n-1}\epsilon_{a_w}r_{n-w},\epsilon_{a_m}\right)$ and $\left(\sum_{w=m}^{n-1}\epsilon_{a_w'}r_{n-w},\epsilon_{a_m}\right).$ Similarly to the inequality \eqref{epsilons}, we find that 
    \begin{equation}
        \label{20'}\left(\sum_{w=m}^{n-1}\epsilon_{a_w}r_{n-w},\epsilon_{a_m}\right)\geq r_{n-m}(1-2^{1-k}).
    \end{equation} The second product $\left(\sum_{w=m}^{n-1}\epsilon_{a_w'}r_{n-w},\epsilon_{a_m}\right)$ is estimated differently. We know that $a_m\neq a_m'.$ It means that the vectors $\epsilon_{a_m}$ and $\epsilon_{a_m'}$ are either opposite or orthogonal. Since the points $Q_n(x,a)$ and $Q_n(x,a')$ coincide, the first case is impossible due to Lemma \ref{54}. Therefore, $\epsilon_{a_m}\perp\epsilon_{a'_m},$ and $$\left(\sum_{w=m}^{n-1}\epsilon_{a_w'}r_{n-w},\epsilon_{a_m}\right) =\left(\sum_{w=m+1}^{n-1}\epsilon_{a_w'}r_{n-w},\epsilon_{a_m}\right).$$ The sequence $a'$ is $k$-good. Therefore, if $(\epsilon_{a_w'},\epsilon_{a_m})\neq 0,$ then for $i\in[1;k-1]$ we have $(\epsilon_{a_{w+i}'},\epsilon_{a_m})= 0.$ Rewrite the scalar product $\left(\sum_{w=m+1}^{n-1}\epsilon_{a_w'}r_{n-w},\epsilon_{a_m}\right)$ as $$\left(\sum_{w=m+1}^{n-1}\epsilon_{a_w'}r_{n-w},\epsilon_{a_m}\right)  = \sum_{w=m+1}^{n-1} r_{n-w}(\epsilon_{a_w'},\epsilon_{a_m}).$$ Suppose that the first nonzero summand in the latter sum is at $w=m+j\geq m+1.$ Then the summands with $w=m+j+1,\dots,m+j+k-1$ are zero, and $$\sum_{w=m+1}^{n-1} r_{n-w}(\epsilon_{a_w'},\epsilon_{a_m})\leq r_{n-m-j}+r_{n-m-j-k}+\dots.$$ But $r_{n-m-j}\leq r_{n-m}/2,$ while $$r_{n-m-j-k}+\dots\leq 2^{-k-1}r_{n-m}+2^{-k-2}r_{n-m}+\dots = 2^{-k}r_{n-m}.$$ Therefore, $$\left(\sum_{w=m+1}^{n-1}\epsilon_{a_w'}r_{n-w},\epsilon_{a_m}\right) = \sum_{w=m+1}^{n-1} r_{n-w}(\epsilon_{a_w'},\epsilon_{a_m})\leq r_{n-m}(1/2+2^{-k}).$$ This inequality and the inequality \eqref{20'} imply that the difference between the products $\left(\sum_{w=m}^{n-1}\epsilon_{a_w}r_{n-w},\epsilon_{a_m}\right)$ and $\left(\sum_{w=m}^{n-1}\epsilon_{a_w'}r_{n-w},\epsilon_{a_m}\right)$ is at least $r_{n-m}(\frac{1}{2}-3\cdot 2^{-k}).$ Similarly, the difference between $\left(\sum_{w=m}^{n-1}\epsilon_{a_w}r_{n-w},\epsilon_{a_m'}\right)$ and $\left(\sum_{w=m}^{n-1}\epsilon_{a_w'}r_{n-w},\epsilon_{a_m'}\right)$ is at least $r_{n-m}(\frac{1}{2}-3\cdot 2^{-k}).$ 
    
    Since $m<k,$ the plane orthogonal to $\epsilon_{a_0}$ contains the vectors $\epsilon_{a_m}$ and $\epsilon_{a_m'}.$ 
    
    $$\|Proj\left(\sum_{w=m}^{n-1}\epsilon_{a_w}r_{n-w}-\sum_{w=m}^{n-1}\epsilon_{a_w'}r_{n-w}\right)\|\geq $$$$\geq\left(\sum_{w=m}^{n-1}\epsilon_{a_w}r_{n-w}-\sum_{w=m}^{n-1}\epsilon_{a_w'}r_{n-w}, \epsilon_{a_m}\right)+\left(\sum_{w=m}^{n-1}\epsilon_{a_w'}r_{n-w}-\sum_{w=m}^{n-1}\epsilon_{a_w}r_{n-w}, \epsilon_{a_m'}\right) \geq$$$$\geq r_{n-m}(1/2-3\cdot 2^{-k})\cdot 2 = r_{n-m}(1-6\cdot 2^{-k}).$$  But the norm of the same projection is also at most $2\Psi(x,a'_0)+2^{2-k}r_n+2$ due to the inequality \eqref{between}.      
    
    Therefore, $$r_{n-m}(1-6\cdot 2^{-k})\leq 2\Psi(x,a'_0)+2^{2-k}r_n+2.$$ Since $r_n = 2^{n+2}-2$ and $r_{n-m}=2^{n-m+2}-2,$ we have $$\frac{r_{n-m}}{r_n}= \frac{2^{n-m+2}-2}{2^{n-m+2}}2^{-m}\frac{2^{n+2}}{2^{n+2}-2} \geq 2^{-m-1},$$ and $$2^{-m-1}(1-6\cdot 2^{-k})\leq 2\frac{\Psi(x,a'_0)}{r_n}+2^{2-k}+2^{-n}.$$ Since $k\geq 3,$ we have $1-6\cdot 2^{-k}>0,$ and the first number $m$ such that $a_m\neq a'_m$ is $$m\geq -\log_2\left(\frac{2^{-n}+2^{2-k}+2\Psi(x,a_0')/r_n}{1-6\cdot 2^{-k}}\right)-1.$$ 

    \textbf{Case 2: $m\geq k$.} Then consider the expression $$-\log_2\left(\frac{2^{-n}+2^{2-k}+2\max(0,\Psi(x,a_0'))/r_n}{1-6\cdot 2^{-k}}\right).$$ But  $$\left(\frac{2^{-n}+2^{2-k}}{1-6\cdot 2^{-k}}\right)\geq \left(\frac{2^{2-k}}{1}\right) = 2^{2-k}.$$ Therefore, $$-\log_2\left(\frac{2^{-n}+2^{2-k}+2\max(0,\Psi(x,a_0'))/r_n}{1-6\cdot 2^{-k}}\right)\leq -\log_2\left(2^{2-k}\right)=k-2<k.$$ 
    
    If $m\geq k,$ we definitely have $$m\geq-\log_2\left(\frac{2^{-n}+2^{2-k}+2\max(0,\Psi(x,a_0'))/r_n}{1-6\cdot 2^{-k}}\right)-1.$$ In either case, the Lemma's statement is true.
\end{proof}
\subsection{End of proof of Theorem \ref{mainmal}}   

    Finally, recall that a set $X\subset\mathbb{Z}^d$ is \textit{supportive} if no cross $\{x\}\cup\{x\pm e_i\}$ intersects $X$ only by one of the points $x\pm e_i$. Define $$N_n(k) = \max_{X\text{ is }supportive,\,x,y\in X} |\{a\text{ of length n }|Q_n(x,a)=y, \lceil a_i/2\rceil\neq \lceil a_j/2\rceil\text{ for }|i-j|<k, \}|.$$ This is the biggest possible number of $k$-good sequences $a$ of length $n$ such that they all yield the same point $Q_n(x,a).$ Consider the set $X$ and the points $x,y$ such that the number $N_n(k)$ is reached and the set $S = \{a\text{ of length n }|Q_n(x,a)=y\}.$ Define the set $S_0 = \{a_0|a\in S\}$ and $l = |S_0|.$ Denote the $l$ numbers in $S_0$ by $s^1,\dots,s^l.$ For any number $s^i$ in $S_0$ there exists a sequence $a^i\in S$ starting with $s.$
    
    The sequences in the set $S$  start with $l$ different numbers $s^1,\dots,s^l\in S_0,$ and Lemma \ref{54} prevents any two values $\lceil s^i/2\rceil$ and $\lceil s^j/2\rceil$ from coinciding. For any number $s^i$ we have defined the value $\Psi(x,s^i):=(Q_n(x,a^i)-x,\epsilon_{s^i}).$  The inequality \eqref{prods} ensures that the sum $\sum_{i=1}^l \max(0,\Psi(x,s^i))\leq \sum_{i=1}^l|\Psi(x,s^i)|\leq 4r_n.$ Meanwhile, for any $s^i$ the Lemma \ref{4.5} ensures that the sequences $a$ starting with $s^i$ have the same members $a^i_m$ for any $$m<-\log_2\left(\frac{2^{-n}+2^{2-k}+2\max(0,\Psi(x,s^i))/r_n}{1-6\cdot 2^{-k}}\right)-1$$ Thus $$N_n(k)\leq \sum_{i=1}^l N_{n-m(i)}(k),$$ where $m(i)$ is defined as $$m(i):= \max\left(1,\left\lceil-\log_2\left(\frac{2^{-n}+2^{2-k}+2\max(0,\Psi(x,s^i))/r_n}{1-6\cdot 2^{-k}}\right)\right\rceil-1\right).$$ But $\sum |\Psi(x,s^i)|\leq 4r_n,$ and for $n,k>\log_2(d)+3$  $$\sum_{i=1}^l 2^{-m(i)} \leq 2\sum_{i=1}^l \frac{2^{-n}+2^{2-k}+2\max(0,\Psi(x,s^i))/r_n}{1-6\cdot 2^{-k}} \leq 2\cdot\frac{2^{1-n}d+2^{3-k}d+8}{1-6\cdot 2^{-k}} \leq 64.$$ Therefore, for $n,k>\log_2(d)+3$ we have $$2^{-7n}N_n(k)\leq 2^{-7n}\sum_{i=1}^l N_{n-m(i)}(k) = \sum_{i=1}^l 2^{-7m(i)}2^{-7(n-m(i))}N_{n-m(i)}(k).$$ But $\sum_{i=1}^l 2^{-m(i)}\leq64.$ Therefore, $\sum_{i=1}^l 2^{-7m(i)} \leq 1,$ and $$2^{-7n}N_n(k)\leq \sum_{i=1}^l 2^{-7m(i)}2^{-7(n-m(i))}N_{n-m(i)}(k) \leq$$$$\leq \left(\sum_{i=1}^l 2^{-7m(i)}\right)\max_{w<n}2^{-7(n-w)}N_{n-w}(k)\leq \max_{w<n}2^{-7(n-w)}N_{n-w}(k).$$ Therefore, for any fixed $k>\log_2(d)+3$ the sequence $2^{-7n}N_n(k)$ is bounded in $n.$

    Finally, we notice that there are at least $2^n(d-k)^n$ sequences $a$ of length $n$ such that $\lceil a_i/2\rceil\neq \lceil a_j/2\rceil\text{ for }|i-j|<k.$ In addition, no point $y$ can be obtained from more than $2^{7n}C(k)$ such sequences $a$. Then the total number of different points $Q_n(a)$ is at least $\left(\frac{2d-2k}{2^7}\right)^nC(k)^{-1}.$ Fixing $k = [\log_2(d)]+4,$ we find that for $d\geq 2^7$ the total number of different points $Q_n(a)$ is at least $\left(\frac{d}{2^7}\right)^n\wt{C}(d).$ 

    On the other hand, $r_n=2^{n+2}-2,$ and the points $Q_n(x,a)$ are located at a distance of $O(2^n)$ from the point $x.$ Therefore, the discrete dimension $\dim(X)$ satisfies the following inequality: $$\dim(X) = \limsup_{r\to\infty} \frac{\ln|X\cap B_r(x)|}{\ln r} \geq \limsup \frac{\ln|X\cap B_r(x)}{\ln r}\frac{\ln\left(\frac{d}{2^7}\right)}{\ln(2)} \geq  \frac{\ln(d)}{\ln(2)} - 7 = \log_2(d)-7,$$ as stated in the Theorem \ref{mainmal}.
\end{proof}

\newpage

\begin{biblist}
\begin{bibdiv}

\bib{BK05}{article}{author = {Bourgain, Jean}, author={Kenig, Carlos},
address = {Heidelberg},
copyright = {Springer-Verlag 2005},
issn = {0020-9910},
journal = {Inventiones mathematicae},
number = {2},
pages = {389-426},
publisher = {Springer Nature B.V},
title = {On localization in the continuous Anderson-Bernoulli model in higher dimension},
volume = {161},
year = {2005}}

\bib{BLMS22}{article}{author = {Buhovsky, Lev}, author={Logunov, Alexander}, author={Malinnikova, Eugenia}, author={Sodin, Mikhail},
issn = {0012-7094},
journal = {Duke mathematical journal},
number = {6},
title = {A discrete harmonic function bounded on a large portion of $\mathbb{Z}^2$ is constant},
volume = {171},
year = {2022},}

\bib{DS20}{article}{author = {Ding, Jian}, author={Smart, Charles K.},
address = {Berlin/Heidelberg},
copyright = {Springer-Verlag GmbH Germany, part of Springer Nature 2019},
issn = {0020-9910},
journal = {Inventiones mathematicae},
number = {2},
pages = {467-506},
publisher = {Springer Berlin Heidelberg},
title = {Localization near the edge for the Anderson Bernoulli model on the two dimensional lattice},
volume = {219},
year = {2020},}

\bib{LZ22}{article}{author = {Li, Linjun}, author={Zhang, Lingfu},
issn = {0012-7094},
journal = {Duke mathematical journal},
number = {2},
title = {Anderson–Bernoulli localization on the three-dimensional lattice and discrete unique continuation principle},
volume = {171},
year = {2022},}

\bib{S22}{webpage}{author={Smart, C.}, title={Unique continuation for lattice Schrödinger operators}, year={2022}, note={ICM plenary talk}, url={https://www.youtube.com/watch?v=mdSbE3fPBw0}}

\end{bibdiv}
\end{biblist}

\end{document}